\documentclass[11pt]{article}
\usepackage{amssymb,amsmath,amsthm}
\usepackage{a4}
%% THEOREM ENVIRONMENTS
\theoremstyle{plain}
\newtheorem{theorem}{Theorem}[section]
\newtheorem{proposition}[theorem]{Proposition}
\newtheorem{lemma}[theorem]{Lemma}
\newtheorem*{lemma*}{Auxiliary Lemma}

\newtheorem{conjecture}[theorem]{Conjecture}
\newtheorem{problem}[theorem]{Problem}

\theoremstyle{definition}

\newtheorem*{acknowledgement}{Acknowledgement}
%%%%%%%%%%%%%%%%%%%%%%%%%%%%%%%%%%%%

\DeclareMathOperator{\st}{st}
\DeclareMathOperator{\cst}{\overline{st}}
\DeclareMathOperator{\lk}{lk}
\DeclareMathOperator{\antist}{ast}
\DeclareMathOperator{\Z}{\mathbb{Z}}
\DeclareMathOperator{\g}{g}
\DeclareMathOperator{\f}{f}
\DeclareMathOperator{\h}{h}
%%%%%%%%%%%%%%%%%%%%%%%%%%%%%%%%%%%%%%%%%%%%%%%%%%%%%%
\title{Remarks on Missing Faces and Generalized Lower Bounds on Face Numbers}
\author{Eran Nevo\footnote{Department of Mathematics, Cornell
University, Ithaca USA, E-mail address:
eranevo@math.cornell.edu. Research partially supported by an NSF Award DMS-0757828.}}
\begin{document}
\maketitle
\begin{center}
\emph{Dedicated to Anders Bj\"{o}rner on the occasion of his 60th birthday.}
\end{center}
\begin{abstract}
We consider simplicial polytopes,
and more general simplicial complexes, without missing faces above a fixed dimension.
Sharp analogues of McMullen's generalized lower bounds, and of Barnette's lower bounds, are conjectured for these families of complexes.
Some partial results on these conjectures are presented.
\end{abstract}
\section{Introduction}\label{sec:Intro}
For simplicial polytopes, McMullen's generalized lower bounds on face numbers were proved by Stanley, conveniently phrased as nonnegativity of the corresponding $g$-vector \cite{Stanley-gThm}. As the matrix that sends the $g$-vector to its $f$-vector has nonnegative entries, Stanley's result immediately implies Barnette's lower bound theorem for simplicial polytopes. In turn, Barnette's result immediately implies that the simplex minimizes all face numbers among simplicial polytopes with the same dimension.

We will phrase conjectures analogous to these three results, depending on a new parameter, namely the maximal size of a missing face, i.e. the maximal size of a
minimal non face with respect to inclusion. This gives a hierarchy of conjectures on lower bounds on face numbers, interpolating between the generalized lower bound conjecture for simplicial spheres \cite{McMullen-g-conj} and Gal's conjecture for flag spheres \cite{Gal}.
We will now work our way to these three conjectures, from weakest to strongest.

It is well known, and easy to prove, that among all simplicial complexes with a nonzero (reduced) $d$-homology, the boundary of the $(d+1)$-simplex minimizes all face numbers. Similarly, Meshulam proved that among all flag complexes with a nonzero (reduced) $d$-homology, the boundary of the $(d+1)$-dimensional crosspolytope minimizes all face numbers \cite{Meshulam-DomAndHom}. (Recently  Athanasiadis \cite{Athanasiadis} proved that for the subfamily of flag homology $d$-spheres the $h$-vector is minimized by the boundary of the $(d+1)$-dimensional crosspolytope, hence so are the face numbers.)

For this minimization problem we can clearly assume that the complexes are $d$-dimensional, as restricting to the $d$-skeleton cannot make the $d$-th homology vanish and cannot increase the face numbers.

We find it natural to view these two families of simplicial complexes as extreme cases of the following families. Let $\mathcal{C}(i,d)$ be the family of $d$-dimensional simplicial complexes with a nonzero reduced $d$-homology (if $\Delta\in \mathcal{C}(i,d)$ then $\tilde{H}_d(\Delta;\mathbb{Z})\neq 0$) and with no missing faces of dimension $>i$. ($F$ is a \emph{missing face} of $\Delta$ if its boundary $\partial F\subseteq \Delta$ and $F\notin \Delta$. Its dimension is $|F|-1$.) Thus, $\mathcal{C}(1,d)$ are the flag $d$-complexes with nonzero $d$-th homology and $\mathcal{C}(d+1,d)$ are all the $d$-complexes with a nonzero $d$-th homology. (Clearly if $i>d+1$ then $\mathcal{C}(i,d)=\mathcal{C}(d+1,d)$.)
Denote by $\f_i(\Delta)$ the number of $i$-dimensional faces in the complex $\Delta$.

Let $d\geq 0, 0<i$ be integers. Then there exist unique integers $q\geq 0, 1\leq r\leq i$ such that $d+1=qi+r$.
(Note that the range $1\leq r\leq i$ is unusual. It will simplify the writing later on.)
Let
$$S(i,d):= \partial\sigma^i*...*\partial\sigma^i*\partial\sigma^r,$$
where $\partial\sigma^i$, the boundary of the $i$-simplex, appears $q$ times in this join. Then $S(i,d)$ is a $d$-dimensional simplicial sphere.
Inspired by Meshulam \cite{Meshulam-DomAndHom}, we prove that it has the following extremal properties:
\begin{theorem}\label{thm:missingLBT}
Let $d\geq 0, 0<i\leq d+1$ be integers. Write $d+1=qi+r$ where $1\leq r\leq i$ and $q,r$ are integers.
Let $\Delta \in \mathcal{C}(i,d)$. Then:

(a) If $i$ divides $d+1$ then $\f_j(\Delta)\geq \f_j(S(i,d))$ for every $j$.

(b) For any $i$, $\f_0(\Delta)\geq \f_0(S(i,d))$.

(c) For any $i$, $\f_j(\Delta)\geq \f_j(S(i,d))$ for every $1\leq j\leq r$.

(d) If $i$ divides $d+1$ and $\f_j(\Delta)=\f_j(S(i,d))$ for every $j$ then $\Delta=S(i,d)$.
\end{theorem}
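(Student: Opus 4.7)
The plan is to attack all four parts together by strong induction on $\f_0(\Delta)$, with an inner induction on $d$, organized around the link/antistar decomposition at a vertex. For $v \in \Delta$ set $L = \lk(v)$ and write $\Delta = \cst(v) \cup \antist(v)$ with intersection $L$; since $\cst(v) = v * L$ is acyclic, Mayer--Vietoris forces either $\tilde H_d(\antist(v)) \neq 0$ or $\tilde H_{d-1}(L) \neq 0$. I shall also use the standard identity $\f_j(\Delta) = \f_j(\antist(v)) + \f_{j-1}(L)$.

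A preliminary lemma, which is the easiest step, asserts that both $\antist(v)$ and $L$ again have no missing face of dimension $>i$. For $\antist(v)$, missing faces are exactly the missing faces of $\Delta$ that avoid $v$. For $L$, any missing face $F$ of the link gives either a missing face $F$ of $\Delta$ of the same dimension (if $F \notin \Delta$), or a missing face $F \cup \{v\}$ of $\Delta$ of dimension $\dim F + 1$ (if $F \in \Delta$); in either case $\dim F \leq i$.

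If $\tilde H_d(\antist(v)) \neq 0$, then $\antist(v) \in \mathcal{C}(i,d)$ has fewer vertices, so the inductive hypothesis delivers $\f_j(\antist(v)) \geq \f_j(S(i,d))$ and thereby $\f_j(\Delta) \geq \f_j(S(i,d))$ with slack. Otherwise $L \in \mathcal{C}(i, d-1)$ and the inner induction gives $\f_{j-1}(L) \geq \f_{j-1}(S(i, d-1))$. Comparing this with the analogous decomposition of $S(i,d)$ at a vertex $v'$ of the last factor $\partial\sigma^r$ --- whose link in $S(i,d)$ is precisely $S(i, d-1)$ --- the remaining task reduces to bounding $\f_j(\antist(v))$ from below by $\f_j(\antist_{S(i,d)}(v')) = \f_j\bigl((\partial\sigma^i)^{*q}*\sigma^{r-1}\bigr)$. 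When $r \geq 2$ this follows at once from $\f_j(\antist(v)) \geq \f_j(L)$ and the inductive estimate on $L$.

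The hard part will be the borderline case $r = 1$ (i.e., $i \mid d$) in proving part (b): one must produce a vertex of $\Delta$ not adjacent to $v$. For flag complexes ($i=1$) this is Meshulam's observation --- if every vertex were adjacent to $v$, then $v$ would be a cone apex of $\Delta$, making $\Delta$ contractible and contradicting $\tilde H_d(\Delta) \neq 0$. For $i \geq 2$, universal adjacency no longer forces the cone structure, and the delicate step is to choose $v$ cleverly (e.g.\ inside a missing face of maximum dimension) or to rule out the configuration ``complete $1$-skeleton plus nontrivial $\tilde H_d$ plus no missing face of dimension $>i$'' by a direct structural argument. Part (a) sidesteps this issue entirely since $i \mid d+1$ forces $r = i$, which is $\geq 2$ except in Meshulam's flag case. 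Part (d) is then obtained by tracking equality through the induction: Case 1 yields strict inequality and is therefore excluded, so equality forces every step into Case 2, the inductive uniqueness propagates $L \cong S(i, d-1)$, and reassembling $\Delta = (v*L) \cup \antist(v)$ yields $\Delta \cong S(i,d)$.
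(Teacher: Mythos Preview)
Your overall architecture (vertex link/antistar, Mayer--Vietoris, and a preliminary lemma that links and antistars inherit the missing-face bound) matches the paper. But two of your claimed steps do not go through.

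\textbf{Gap in part (a).} You write that for $r\ge 2$ the needed bound $\f_j(\antist(v))\ge \f_j\bigl((\partial\sigma^i)^{*q}*\sigma^{r-1}\bigr)$ ``follows at once from $\f_j(\antist(v))\ge \f_j(L)$ and the inductive estimate on $L$''. It does not: the inductive estimate only gives $\f_j(L)\ge \f_j(S(i,d-1))=\f_j\bigl((\partial\sigma^i)^{*q}*\partial\sigma^{r-1}\bigr)$, which is strictly smaller than $\f_j\bigl((\partial\sigma^i)^{*q}*\sigma^{r-1}\bigr)$ (already for $i=r=2$, $d=3$, $j=1$: the two numbers are $9$ and $10$). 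Equivalently, in the recurrence $\f_j(S(i,d))=\f_{j-1}(S(i,d-1))+\f_j(S(i,d-1))+\f_{j-r}(S(i,d-r))$ your argument accounts only for the first two summands. The paper supplies the missing third summand by looking not just at $L$ but at $\antist(v)\setminus \lk(v)$: a \emph{minimal} face $F$ there has $|F|\le r=i$ (otherwise $F\cup\{v\}$ would be a missing face of dimension $>i$), and then $\f_j(\antist(v)\setminus\lk(v))\ge \f_{j-|F|}(\lk(F))\ge \f_{j-|F|}(S(i,d-|F|))\ge \f_{j-r}(S(i,d-r))$, the last step by an explicit injection. This is exactly the idea your plan is missing; once you add it, the equality analysis in (d) also goes through as the paper has it.

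\textbf{Gap in part (b).} You correctly identify the $r=1$ case of your scheme as ``the hard part'', but you do not actually prove it; choosing $v$ in a maximal missing face does not suffice, and the ``direct structural argument'' is left unspecified. The paper does \emph{not} prove (b) by the link/antistar induction at all. It invokes an Alexander-duality style result of Bj\"orner et al.: building the nerve $\Gamma$ of the cover of $V$ by missing faces, one gets $\tilde H_d(\Delta)\cong \tilde H^{\,n-d-3}(\Gamma)$, hence a family of $n-d-1$ missing faces (each of size $\le i+1$) whose union is all of $V$, which immediately yields $n\ge d+2+q$. Without this external input your plan has no proof of (b), and since the paper's proof of (c) explicitly falls back on (b) when all missing faces have dimension $>r$, your plan for (c) is also incomplete.
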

The cases $i=d+1$ and $i=1$ recover the two known results mentioned above.
We write (b) and (c) separately on purpose, as (b) will play a special role.
The condition $i\mid (d+1)$ in part $(a)$ seems to be an artifact of the proof.
\begin{conjecture}\label{conj:missingLBC}
Let $d\geq 0$ and $0<i\leq d+1$ be integers.
Let $\Delta \in \mathcal{C}(i,d)$. Then
$\f_j(\Delta)\geq \f_j(S(i,d))$ for every $j$.

Moreover, if equality is attained for every $j$ then $\Delta=S(i,d)$.
\end{conjecture}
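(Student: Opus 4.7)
The plan is to prove Conjecture~\ref{conj:missingLBC} by induction on $d$, anchoring on the divisibility case (Theorem~\ref{thm:missingLBT}(a)) and extending the low-$j$ bounds of parts (b) and (c) into the middle range via a link recursion. The tool I would use throughout is the identity
$$(j+1)\,\f_j(\Delta) \;=\; \sum_{v\in V(\Delta)} \f_{j-1}(\lk_\Delta(v)),$$
together with the (easy) remark that for any vertex $v$ and any missing face $F$ of $\lk_\Delta(v)$, either $F$ itself or $F\cup\{v\}$ is a missing face of $\Delta$; in both cases $\dim F\le i$, so $\lk_\Delta(v)$ has no missing face of dimension exceeding $i$. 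To run induction on $d$ one also needs nontrivial $(d-1)$-homology of the link; a Mayer--Vietoris sequence for $\Delta=\st(v)\cup\antist(v)$ (with $\st(v)$ contractible) produces
$$0 \to \tilde H_d(\antist(v)) \to \tilde H_d(\Delta) \to \tilde H_{d-1}(\lk_\Delta(v)) \to \tilde H_{d-1}(\antist(v)) \to \cdots,$$
so for every vertex $v$ at least one of $\antist(v)\in\mathcal{C}(i,d)$ or $\lk_\Delta(v)\in\mathcal{C}(i,d-1)$ holds. In the first case $\antist(v)$ has one fewer vertex and a nested induction on vertex number finishes; in the second, induction on $d$ applies to the link and the face-count identity converts it into a bound on $\f_j(\Delta)$.

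A parallel and in many ways more attractive route is the \emph{join trick}: $\Delta*\partial\sigma^{i-r}$ lies in $\mathcal{C}(i,d+i-r)$, and $i\mid(d+i-r+1)$, so part (a) of Theorem~\ref{thm:missingLBT} applies directly. Using the join formula $\f_j(A*B)=\sum_{a+b=j-1}\f_a(A)\f_b(B)$, this produces, for each $j$, a linear inequality in $\f_0(\Delta),\ldots,\f_d(\Delta)$ which, taken together with the already-established bounds of parts (b) and (c) on $\f_0,\f_1,\ldots,\f_r$, ought to imply the conjectured bounds in the remaining range $r<j\le d$. Thus the plan is to combine the link-sum identity (to propagate the low-$j$ bounds upward) with the join trick (to inject the divisibility-case input from one dimension up).

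The two obstacles I expect to be the main difficulty, and which I would have to attack together, are as follows. First, in the link induction the extremal complex $S(i,d-1)$ does \emph{not} appear as a vertex link of $S(i,d)$: these links are contractible joins of the form $\sigma^{i-1}*S(i,d-i)$, so naively summing the inductive bound $\f_{j-1}(\lk_\Delta(v))\ge \f_{j-1}(S(i,d-1))$ over all vertices overshoots. One must group vertices of $\Delta$ (perhaps by identifying which ``block'' of the join structure of $S(i,d)$ they should correspond to) and prove a block-refined link inequality. Second, in the join-trick route, $S(i,d+i-r)=(\partial\sigma^i)^{*(q+1)}$ differs from $S(i,d)*\partial\sigma^{i-r}=(\partial\sigma^i)^{*q}*\partial\sigma^r*\partial\sigma^{i-r}$; the latter is a \emph{coarser} $f$-vector representative of the same sphere $S^{d+i-r}$, with strictly larger face numbers in intermediate dimensions, so the inequalities one reads off from part (a) are weaker than the conjecture demands in precisely the range $r<j<d$ that is not yet handled. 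Reconciling these two losses — presumably via a refined bound for the subclass of complexes in $\mathcal{C}(i,d+i-r)$ of the form $\Delta*\partial\sigma^{i-r}$ — is, I believe, the crux.

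For the equality clause, when $i\mid (d+1)$ it is Theorem~\ref{thm:missingLBT}(d); otherwise, equality at every $j$ forces equality at every step of the link recursion, hence (by induction) every relevant link is forced to be $S(i,d-1)$ up to the block correction discussed above, and a reconstruction-from-links argument should recover $\Delta=S(i,d)$. Of the steps above, I expect the block-refined rigidity of links in the non-divisible range to be the most delicate part, because $S(i,d)$ has no transitive automorphism group shuffling its vertices and thus no a priori ``uniform link'' structure to rely on.
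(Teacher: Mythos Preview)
The statement you are attempting is Conjecture~\ref{conj:missingLBC}, which the paper explicitly leaves \emph{open}; there is no proof in the paper to compare against. The paper only establishes the partial results of Theorem~\ref{thm:missingLBT}: the full inequality when $i\mid(d+1)$, the bound on $\f_0$ in general, the bounds on $\f_j$ for $j\le r$, and uniqueness in the divisible case. The author remarks that the divisibility hypothesis ``seems to be an artifact of the proof,'' but does not remove it.

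Your plan reproduces, in essence, the paper's machinery for the divisible case: the Mayer--Vietoris dichotomy (Lemma~\ref{lem:H(lk)_is_not_0}), the missing-face inheritance for links (Lemma~\ref{lem:missing_faces(lk)}), and the nested induction on $d$ and on total face count (Lemma~\ref{lem:H_d(ast)=0}). Where you go beyond the paper is the join trick $\Delta\mapsto\Delta*\partial\sigma^{i-r}$, which is not in the paper and is a genuinely nice idea. However, you have correctly diagnosed both why the paper's argument stalls and why your extension does not close the gap.

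The first obstacle you name is exactly the point where the paper's proof of Theorem~\ref{thm:missingLBT}(a) breaks for $r<i$: the argument there does not simply sum the link bound over vertices, but rather bounds $\f_j(\antist(v)\setminus\lk(v))$ from below by $\f_{j-|F|}(\lk(F))$ for a minimal $F$ in that set, and this $F$ has $|F|\le r$ only because $r=i$ forces any larger $F$ to create a forbidden missing face $F\cup\{v\}$. When $r<i$ no such control on $|F|$ is available, and Lemma~\ref{lem:f-recurrence} cannot be invoked. Your proposed ``block-refined link inequality'' is an accurate description of what would be needed, but you give no mechanism for producing it, and none is known.

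The second obstacle is equally real: $S(i,d)*\partial\sigma^{i-r}$ strictly dominates $S(i,d+i-r)$ in face numbers for intermediate $j$, so applying Theorem~\ref{thm:missingLBT}(a) to $\Delta*\partial\sigma^{i-r}$ yields inequalities that are too weak precisely in the range $r<j\le d-r$ left uncovered by parts (b) and (c). Your suggestion to seek a refined bound for complexes of the special form $\Delta*\partial\sigma^{i-r}$ inside $\mathcal{C}(i,d+i-r)$ amounts to proving the conjecture itself in disguise.

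In short: your proposal is not a proof but a research plan, the ingredients you assemble are those the paper already uses for its partial results, and the two obstacles you flag are genuine and, as far as the paper is concerned, unresolved.
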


A refined question is to give lower bounds on face numbers of complexes in $\mathcal{C}(i,d)$  with a given number of vertices. The answer to this question for simplicial polytopes is a well known result by Barnette, often referred to as `The lower bound theorem' \cite{Barnette-LBT}. Barnette later showed that these lower bounds hold for all triangulated manifolds \cite{BarnetteManifolds}. Kalai showed they hold for all homology spheres, and more general complexes, and characterized the case of equality \cite{Kalai-LBT}.

To state these results, we define stacked polytopes and homology spheres.
 A \emph{stacking} is the operation of adding a pyramid over a facet of a given simplicial polytope.
A polytope is \emph{stacked} if it can be obtained from a simplex by repeating the stacking operation finitely many times. Let $Sk(d,n)$ be the boundary complex of a stacked $(d+1)$-polytope with $n$ vertices. While the combinatorial type of $Sk(d,n)$ is not unique, its face numbers are determined.
Next, a $d$-dimensional complex $\Delta$ is a
\emph{homology sphere} if for every face $F$ in $\Delta$ (including the empty set), and for every $0\leq j$, there is an isomorphism of reduced homology groups $\tilde{H}_j(\lk(F,\Delta);\Z)\cong \tilde{H}_j(S^{\dim(\Delta)-|F|};\Z)$ where $S^m$ denotes the $m$-dimensional sphere, $\Z$ the integers and $\lk(F,\Delta)$ is the link of $F$ in $\Delta$. In particular, the boundary complex of a simplicial polytope is a homology sphere; however there are many non-polytopal examples of homology spheres, e.g. \cite{Kalai-manyspheres}. The following is the lower bound theorem (LBT):
\begin{theorem}\label{thm:stacked,Kalai}(\cite{BarnetteManifolds,Barnette-LBT} and \cite{Kalai-LBT})
Let $d\geq 3$, and let $\Delta$ be the boundary complex of a simplicial $(d+1)$-polytope, or more generally a homology $d$-sphere, with $n$ vertices. Then
$\f_j(\Delta)\geq \f_j(Sk(d,n))$ for every $j$.
If equality holds for some $j\geq 1$ then $\Delta$ is combinatorially isomorphic to some $Sk(d,n)$.
\end{theorem}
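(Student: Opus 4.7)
The plan is to follow the rigidity-theoretic approach of Kalai \cite{Kalai-LBT}. The central quantity is
$$\g_2(\Delta) := \f_1(\Delta) - (d+1)\f_0(\Delta) + \binom{d+2}{2},$$
and the entire theorem reduces, via Dehn--Sommerville relations and the MPW reduction, to proving the single inequality $\g_2(\Delta) \geq 0$ for every homology $d$-sphere $\Delta$ with $d \geq 3$. Note that for the stacked sphere $Sk(d,n)$ one has $\g_2(Sk(d,n)) = 0$, so the $j=1$ case is exactly this inequality.

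\textbf{Step 1 (Rigidity).} The heart of the argument is to show that the graph $G(\Delta) = (\V(\Delta), \f_1\text{-edges of }\Delta)$ is generically $(d+1)$-rigid in $\mathbb{R}^{d+1}$. I would proceed by induction on $d$. The base case $d=3$ follows from classical Cauchy/Dehn-type rigidity extended to homology $3$-spheres via a direct combinatorial argument. For the inductive step, pick any vertex $v$ and use that $\lk(v,\Delta)$ is a homology $(d-1)$-sphere, so its graph is generically $d$-rigid by induction; Whiteley's cone lemma then lifts $d$-rigidity of $\lk(v,\Delta)$ to $(d+1)$-rigidity of $\cst(v,\Delta)$, and the remaining faces are glued on using the fact that any two stars share at least a common edge (which forces consistent $(d+1)$-rigid placements).

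\textbf{Step 2 (The inequality $\g_2 \geq 0$).} On $n \geq d+2$ generic points in $\mathbb{R}^{d+1}$, the rigidity matroid has rank $(d+1)n - \binom{d+2}{2}$, so any generically $(d+1)$-rigid graph on $n$ vertices has at least $(d+1)n - \binom{d+2}{2}$ edges; this is exactly $\g_2(\Delta) \geq 0$, establishing part of the theorem for $j=1$.

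\textbf{Step 3 (MPW reduction to higher $j$).} The classical MPW reduction expresses, for each $j \geq 2$, the quantity $\f_j(\Delta) - \f_j(Sk(d,n))$ as a nonnegative linear combination (with rational coefficients depending only on $d,j$) of the numbers $\g_2(\lk(F,\Delta))$ over faces $F$ of $\Delta$ of appropriate dimension. Since each link is itself a homology sphere of dimension $\geq 3$ (once one has used induction to handle lower links), Step 2 applied to each link gives $\g_2(\lk(F,\Delta)) \geq 0$, and the desired inequality $\f_j(\Delta) \geq \f_j(Sk(d,n))$ follows term by term.

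\textbf{Step 4 (Equality and main obstacle).} If equality holds for some $j \geq 1$, then by the MPW identity every $\g_2(\lk(F,\Delta))$ that appears with a nonzero coefficient must vanish, and in particular $\g_2(\Delta)=0$. The structural claim is that $\g_2(\Delta)=0$ for a homology $d$-sphere with $d \geq 3$ forces $\Delta$ to be stacked. This is proved by analyzing the equality case of the rigidity bound: a tight $(d+1)$-rigid graph has no generic stresses, which combined with the homology sphere structure allows one to locate a "missing" $(d+1)$-face whose removal corresponds to inverting a stacking operation, and then to induct on $n$. I expect the main obstacle to be exactly this equality analysis together with the rigidity statement in Step 1; the former requires delicate use of the link structure, while the latter is the conceptual heart, as generic rigidity for graphs of homology spheres (as opposed to simplicial polytopes, where one has the polytope itself to anchor rigidity) must be extracted purely combinatorially through the cone-and-glue induction.
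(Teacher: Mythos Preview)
The paper does not give a proof of this theorem at all: it is stated with attributions to \cite{BarnetteManifolds,Barnette-LBT} and \cite{Kalai-LBT} and then used as a black box throughout (e.g.\ in Section~\ref{sec:discuss} and in the proof of Theorem~\ref{thm:missingLBT} for the case $i=d$). So there is nothing in the paper to compare your proposal against.

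That said, your sketch does follow the standard rigidity-theoretic route of Kalai \cite{Kalai-LBT}, and the overall architecture (generic $(d+1)$-rigidity $\Rightarrow$ $\g_2\ge 0$ $\Rightarrow$ MPW for higher $\f_j$ $\Rightarrow$ equality analysis) is correct. Two places deserve more care if you flesh this out. First, in Step~1 the gluing of closed stars is not justified by ``any two stars share at least a common edge''; Kalai's argument builds up $\Delta$ as a union of closed vertex stars so that each new star meets the previously constructed complex in a subcomplex containing a $d$-face, and it is this $d$-face overlap (not merely an edge) that allows one to apply the gluing lemma for generic $(d+1)$-rigidity. Also, the induction naturally bottoms out at $d=2$ (Gluck's theorem that simplicial $2$-spheres are generically $3$-rigid), not at $d=3$. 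Second, your Step~3 formulation of MPW as ``$\f_j(\Delta)-\f_j(Sk(d,n))$ is a nonnegative combination of $\g_2(\lk(F,\Delta))$'' is a correct consequence but not how the reduction is usually run; the actual mechanism is the double-count $(k+1)\f_k(\Delta)=\sum_v \f_{k-1}(\lk(v,\Delta))$ together with the edge inequality applied to each vertex link, iterated down in $k$ (cf.\ the proof of Proposition~\ref{prop:MPWclique} in this paper for the analogous flag computation).
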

We now seek an analogue of this result when an upper bound on the dimension of missing faces is specified. Let $\mathcal{HS}(i,d,n)$ be the family of $d$-dimensional homology spheres with $n$ vertices and without missing faces of dimension $>i$.
Let $d\geq 0, 0<i$ and $d+1=qi+r$ as before ($1\leq r\leq i$). If $\mathcal{HS}(i,d,n)\neq \emptyset$, then by Theorem \ref{thm:missingLBT}(b), $n\geq q(i+1)+(r+1)$. Hence the following definition makes sense:  $S(i,d,n):= \partial\sigma^i*...*\partial\sigma^i*Sk(r-1, n-q(i+1))$, where $\partial\sigma^i$ appears $q$ times in this join. This is possible unless $r=1$ and $n>q(i+1)+2$. In the later case define $S(i,d,n)= \partial\sigma^i*...*\partial\sigma^i*Sk(i, n-(q-1)(i+1))$, where $\partial\sigma^i$ appears $q-1$ times in this join.
In any case, $S(i,d,n)\in \mathcal{HS}(i,d,n)$.
\begin{conjecture}\label{conj:missingLBCf_0}
If $\Delta\in \mathcal{HS}(i,d,n)$ then $\f_j(\Delta)\geq \f_j(S(i,d,n))$ for every $j$.
\end{conjecture}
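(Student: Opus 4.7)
The plan is to mirror the Barnette--Kalai--Stanley strategy for the classical lower bound theorem, adapted to the missing-face constraint. That strategy has two ingredients: first, the edge inequality $\f_1(\Delta) \geq \f_1(Sk(d,n))$ (equivalently, $g_2(\Delta) \geq 0$), proved via Kalai's rigidity theorem; second, an MPW-style reduction using that $\f_j(Sk(d,n))$ is a linear function of $\f_0$ and $\f_1$ for stacked polytopes, which transfers the edge bound to higher $\f_j$. For our conjecture I would aim to establish analogues: (I) the edge inequality $\f_1(\Delta) \geq \f_1(S(i,d,n))$ for every $\Delta \in \mathcal{HS}(i, d, n)$; and (II) a reduction producing $\f_j(\Delta) \geq \f_j(S(i,d,n))$ for $j \geq 2$ from (I) together with the missing-face hypothesis. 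The base case $n = q(i+1) + r + 1$ is precisely Conjecture \ref{conj:missingLBC}, which must be settled first.

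Step (I) is strictly stronger than Kalai's bound, since $S(i,d,n)$ has additional edges forced by its join structure. I would attack it by induction on $d$: for any vertex $v$, one checks directly that $\lk(v, \Delta) \in \mathcal{HS}(i, d-1, \deg v)$, because a missing face of the link of dimension $j > i$ yields a missing face of $\Delta$ of dimension $j$ or $j+1$, both $> i$. Theorem \ref{thm:missingLBT}(b) applied to the link then gives a uniform lower bound on $\deg v$, while the inductive hypothesis gives $\f_1(\lk(v, \Delta)) \geq \f_1(S(i, d-1, \deg v))$ for each $v$. Combining the identity $2\f_1(\Delta) = \sum_v \deg v$ with convexity estimates on $n' \mapsto \f_1(S(i, d, n'))$ should yield the desired edge bound, though the accounting must be done case-by-case according to the active regime in the definition of $S(i, d, n')$.

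Step (II) requires more care than the classical MPW argument: $Sk(d,n)$ is stacked and its higher face numbers are linear in $\f_0$ and $\f_1$, whereas $S(i,d,n)$ is a nontrivial join whose $f$-vector is multilinear in the sizes of its factors. A reasonable approach is a secondary induction on $q$: if $\Delta \in \mathcal{HS}(i, d, n)$ admits a join decomposition $\Delta = \partial F * L$ in which $F$ is a missing face of $\Delta$ of dimension $i$ and $L$ is the induced subcomplex on $V(\Delta) \setminus F$, then $L \in \mathcal{HS}(i, d-i, n-i-1)$ with one fewer $\partial\sigma^i$ factor, and the inductive hypothesis applied to $L$ together with the join identities for the $f$-vector yields the claim for $\Delta$. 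The main obstacle, and in my view the crux of the whole conjecture, is showing that such a ``splitting'' missing face must always exist when $q \geq 1$. This is a rigidity-flavoured statement whose failure in certain regimes may well be linked to the parity restriction $i \mid d+1$ appearing in Theorem \ref{thm:missingLBT}(a).
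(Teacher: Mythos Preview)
The statement you are attempting to prove is \emph{Conjecture}~\ref{conj:missingLBCf_0}; the paper does not prove it and leaves it open. There is therefore no full proof in the paper to compare against. What the paper supplies are partial results: the cases $i\in\{d,d+1\}$ via the classical LBT, the case $i=d-1$ for polytopal boundaries via the $g$-theorem, and---closest in spirit to your plan---Proposition~\ref{prop:MPWclique}, an MPW reduction valid \emph{only} for $i=1$, showing that the flag case follows from the edge inequality alone. That reduction is the classical double count of vertex--face incidences together with a direct verification that the coefficients $a_{k,d},b_{k,d}$ defined by $\f_k(S(1,d,n))=a_{k,d}n-b_{k,d}$ satisfy the resulting recursion. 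This is quite different from your Step~(II), which posits a join splitting $\Delta=\partial F * L$ and an induction on $q$.

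Your proposal has two genuine gaps. First, Step~(I) does not close. Summing the inductive link bound $\f_1(\lk(v,\Delta))\geq \f_1(S(i,d-1,\deg v))$ over vertices yields a lower bound on $3\f_2(\Delta)=\sum_v \f_1(\lk v)$, not on $\f_1(\Delta)$; the identity $2\f_1=\sum_v\deg v$ together with Theorem~\ref{thm:missingLBT}(b) on links only gives $\f_1\geq \tfrac{1}{2}\f_0(S(i,d-1))\cdot n$, which is far too weak. Concretely, the paper isolates the first open instance: for $\Delta\in\mathcal{HS}(2,4)$ one needs $\f_1\geq 6\f_0-21$, and the best the paper can establish (via an independent-set and rigidity argument, not your inductive scheme) is $\f_1\geq 5\tfrac{6}{91}\f_0-15$. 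So the edge inequality is open already here, and your induction would have to manufacture it from the classical LBT on $3$-spheres, which it cannot. Second, your Step~(II) assumes a ``splitting'' missing face of dimension $i$ that peels off a $\partial\sigma^i$ join factor. There is no reason such a face exists: in the flag case the paper exhibits non-suspension extremal examples (obtained by inverse edge contractions on octahedral spheres), so even equality in the conjecture does not force a join decomposition. You yourself flag this as ``the crux,'' and it is; but as stated the approach is a hope rather than an argument.
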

Clearly Conjecture \ref{conj:missingLBCf_0} implies Conjecture \ref{conj:missingLBC} restricted to homology spheres.
For $i=d+1$ and $i=d$ the assertion of the conjecture is the LBT, Theorem \ref{thm:stacked,Kalai}. For $i=d-1$ the conjecture holds if $\Delta$ is the boundary of a simplicial polytope, and follows from the celebrated $g$-theorem \cite{Billera-Lee, Stanley-gThm}. Here Stanley's result is used. Surprisingly, in this case equalities for all $j$'s in the conjecture imply that $\Delta=S(d-1,d,n)$. This follows from recent results in \cite{Nevo-Novinsky}.
For $i=1$, the conjectured lower bounds (for flag homology spheres) would follow from Gal's conjecture on the $\gamma$-polynomial \cite[Conjecture 2.1.7]{Gal}.
In this case there are many examples of equalities for all $j$'s in the conjecture.

We now relate the $g$-theorem and Gal's conjecture, by defining new `$g$-vectors' suitable for the families of $d$-dimensional homology spheres without missing faces of dimension $>i$, denoted by $\mathcal{HS}(i,d)$.

Let $d\geq 0$ and $i>0$ be integers, and let $q\geq 0, 1\leq r\leq i$ be the unique integers such that $d+1=qi+r$.
For such $d$ and $i$ define the polynomial
$$P_{d,i}(t):=(1+t+...+t^i)^q(1+t+...+t^r).$$
It is symmetric as a multiplication of symmetric polynomials.
Further, denote $P_{-1,i}:=1$ (a constant polynomial).
Define the ordered set of polynomials
$$B_{d,i}:=(P_{d,i}(t), tP_{d-2,i}(t), t^2P_{d-4,i}(t),...,t^{\lfloor\frac{d+1}{2}\rfloor}P_{d-2\lfloor\frac{d+1}{2}\rfloor,i}(t)).$$
Note that $B_{d,i}$ is a basis for the space of symmetric polynomials of degree at most $d+1$ and axis of symmetry at `degree' $\frac{d+1}{2}$ (over the rationales, say).
For a symmetric polynomial $\h(t)$ in this space, let $\g^{(d,i)}(\h(t))=(\g^{(d,i)}_0,...,\g^{(d,i)}_{\lfloor\frac{d+1}{2}\rfloor})$ be the vector of coefficients in the expansion of $\h(t)$ in the basis $B_{d,i}$.

Given an $f$-\emph{vector} $(\f_{-1},\f_0,...,\f_{d})$, its corresponding $h$-\emph{vector} $\h =(\h_0,...,\h_{d+1})$ is defined by $\sum_{0\leq i\leq d+1}\h_i x^{d+1-i}= \sum_{0\leq i\leq d+1}\f_{i-1}(x-1)^{d+1-i}$. Clearly, the $h$-vector  carries the same combinatorial information as the $f$-vector. For $\Delta\in \mathcal{HS}(i,d)$ the Dehn-Sommerville relations state that the $h$-polynomial of $\Delta$, $\h_{\Delta}(t)=\h_0+\h_1t+...+h_{d+1}t^{d+1}$, is symmetric (e.g. \cite{StanleyGreenBook}).
Define
$$\g^{(i)}(\Delta):=\g^{(d,i)}(\h_{\Delta}(t)).$$
Clearly, $\g^{(i)}(\Delta)$ is an integer vector.
\begin{conjecture}\label{conj:missing-g-conj}
If $\Delta\in \mathcal{HS}(i,d)$ then $\g^{(i)}(\Delta)\geq 0$ (componentwise).
\end{conjecture}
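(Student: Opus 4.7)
The strategy is to reformulate the conjecture algebraically via Stanley--Reisner theory and seek a generalization of the Lefschetz-decomposition proof of the classical $g$-theorem. Over a field $k$ of characteristic zero, for $\Delta\in\mathcal{HS}(i,d)$ with generic linear system of parameters $\Theta=(\theta_1,\ldots,\theta_{d+1})$, the Artinian reduction $A(\Delta):=k[\Delta]/\Theta$ is a graded Artinian Gorenstein algebra with Hilbert series $\h_\Delta(t)$. The basis polynomial $P_{d,i}(t)$ is itself the Hilbert series of the graded complete-intersection algebra
$$R_{q,r,i}:=k[y_1,\ldots,y_q,z]/(y_1^{i+1},\ldots,y_q^{i+1},z^{r+1}),$$
which is the Artinian reduction of $k[S(i,d)]$ under an lsop adapted to the join decomposition $S(i,d)=\partial\sigma^i\ast\cdots\ast\partial\sigma^i\ast\partial\sigma^r$. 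This strongly suggests interpreting $\g^{(i)}(\Delta)$ as the multiplicity vector in a ``primitive decomposition'' of $A(\Delta)$ with respect to an action of a copy of $R_{q,r,i}$.

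Accordingly, the plan is to produce, for generic $\Theta$, elements $\omega_1,\ldots,\omega_q\in A(\Delta)_{i+1}$ and $\omega'\in A(\Delta)_{r+1}$ generating a subalgebra $R\subseteq A(\Delta)$ isomorphic to $R_{q,r,i}$, together with a decomposition of $A(\Delta)$ as a graded $k$-vector space into pieces whose Hilbert series are precisely the basis polynomials $t^jP_{d-2j,i}(t)$ appearing in $B_{d,i}$. Since $B_{d,i}$ is linearly independent, matching Hilbert series forces the number of pieces with parameter $j$ to equal $\g^{(i)}_j$, yielding nonnegativity for free. The missing-face hypothesis enters crucially here: it forces the Stanley--Reisner ideal $I_\Delta$ to be generated in degrees $\leq i+1$, which is precisely what leaves room for generic degree-$(i+1)$ elements $\omega_m$ to act with maximal rank in the relevant range. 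A natural first reduction of the problem is to prove the assertion when $\Delta$ itself splits as a join $\partial\sigma^i\ast\Delta'$, where a Künneth-style argument gives the decomposition automatically and induction on $q$ applies.

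The hard part will be producing the $\omega_m$ and $\omega'$ with the required property: a simultaneous generalized Lefschetz condition asserting that the multiplication maps
$$\omega_1^{a_1}\cdots\omega_q^{a_q}(\omega')^b\colon A(\Delta)_j\longrightarrow A(\Delta)_{j+(i+1)\sum a_m+(r+1)b}$$
have maximal rank for every admissible $j$ and exponent tuple. For $i=d+1$ (so $q=0$) this reduces to the classical hard Lefschetz property of $A(\Delta)$, established by Stanley for boundaries of simplicial polytopes. For $i=1$ it is a strong form of Gal's $\gamma$-positivity conjecture, which is wide open. Thus the conjecture strictly interpolates between two very difficult statements, and a full proof is out of reach with present tools. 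A sensible intermediate target is the polytopal case with $i\mid(d+1)$ (so $r=i$): here one could attempt to bootstrap from the hard Lefschetz property of the associated projective toric variety by successively choosing $q$ generic Lefschetz elements $\omega_m$ and analyzing the iterated primitive decompositions, in the spirit of mixed Hodge--Riemann inequalities. Even in this restricted setting, ensuring that a generic such flag realizes the algebra $R_{q,i,i}$ abstractly (rather than forcing $\Delta$ to actually split as a join) appears to require a genuinely new algebraic input beyond the standard univariate Lefschetz theory.
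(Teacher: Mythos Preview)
The statement you are attempting to prove is labeled as a \emph{conjecture} in the paper, and the paper does not claim or provide a proof of it. What the paper does provide is supporting evidence: it shows the conjecture implies Conjecture~\ref{conj:missingLBCf_0}, proves the hierarchy result Proposition~\ref{prop:g-hierarchy}, verifies compatibility with joins (Proposition~\ref{prop:g-join}), and discusses low-dimensional cases. So there is no ``paper's own proof'' against which to compare.

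Your write-up is not a proof either, and you say so yourself (``a full proof is out of reach with present tools''). What you have written is a plausible research programme: interpret $P_{d,i}(t)$ as the Hilbert series of the complete intersection $R_{q,r,i}$, and then seek a generalized Lefschetz/primitive decomposition of the Artinian reduction $A(\Delta)$ into pieces whose Hilbert series lie in $B_{d,i}$. The observation that $R_{q,r,i}$ is the Artinian reduction of $k[S(i,d)]$ is correct and is a good structural reason to believe the basis $B_{d,i}$ is natural. You also correctly identify the two extremes: $i=d+1$ is Stanley's hard Lefschetz argument, and $i=1$ is (a strong form of) Gal's conjecture, both of which the paper already points out.

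The genuine gap is exactly the one you name: producing elements $\omega_1,\ldots,\omega_q,\omega'$ of the prescribed degrees acting with the required simultaneous maximal-rank property. Nothing in your outline indicates how the missing-face hypothesis (generators of $I_\Delta$ in degrees $\le i+1$) would actually force such elements to exist; the remark that this ``leaves room'' for generic degree-$(i+1)$ elements is suggestive but not an argument. Since even the $i=1$ case is open for homology spheres, this step cannot be filled in with current techniques, and your proposal should be read as motivation for the conjecture rather than progress toward its proof.
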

Note that $\g^{(i)}_j(S(i,d,n))=0$ for any $2\leq j\leq \lfloor\frac{d+1}{2}\rfloor$. Further,
recall that $\sum_{0\leq i\leq d}\h_i (y+1)^{d-i}= \sum_{0\leq i\leq d}\f_{i-1}y^{d-i}$.
Thus,
Conjecture \ref{conj:missing-g-conj} implies Conjecture \ref{conj:missingLBCf_0}. Note that for $d\leq 4$ these two conjectures are equivalent.

The classical $g$-vector of a $d$-dimensional homology sphere $\Delta$ is $\g(\Delta)=\g^{(d+1)}(\Delta)$, which the $g$-conjecture asserts to be nonnegative, and its Gal polynomial is $\gamma(\Delta)=\g^{(1)}(\Delta)$, which is conjectured to be nonnegative in the flag case.
Note that $\mathcal{HS}(i,d)\subseteq \mathcal{HS}(i+1,d)$. As one may expect, Conjecture \ref{conj:missing-g-conj} set a hierarchy, in the following sense.
\begin{proposition}\label{prop:g-hierarchy}
Let $\h(t)$ be a polynomial in the space of symmetric polynomials of degree at most $d+1$ and axis of symmetry at `degree' $\frac{d+1}{2}$.
If $\g^{(d,i)}(\h(t))\geq 0$ then $\g^{(d,i+1)}(\h(t))\geq 0$.
\end{proposition}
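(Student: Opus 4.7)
The plan is to show that the change-of-basis matrix from $B_{d,i}$ to $B_{d,i+1}$ has nonnegative entries; once this is done, any $\h(t)$ with $\g^{(d,i)}(\h(t)) \geq 0$ is by definition a nonnegative combination of elements of $B_{d,i}$, hence of $B_{d,i+1}$, so $\g^{(d,i+1)}(\h(t)) \geq 0$. Factoring out the common power of $t$ from each basis element $t^j P_{d-2j,i}(t)$ and from the corresponding elements of $B_{d,i+1}$, the statement reduces to the following auxiliary claim: for every $d' \geq -1$, $P_{d',i}(t)$ is a nonnegative integer combination of the elements of $B_{d',i+1}$.

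I would prove the auxiliary claim by induction on the integer $q$ in the representation $d'+1 = qi + r$, $1 \leq r \leq i$. The base case $q=0$ is immediate since $P_{d',i}(t) = 1+t+\cdots+t^{d'+1} = P_{d',i+1}(t)$. For $q \geq 1$, use the factorization $P_{d',i}(t) = (1+t+\cdots+t^i)\,P_{d'-i,i}(t)$ and apply the induction hypothesis to write $P_{d'-i,i}(t) = \sum_m \alpha_m\, t^m P_{d'-i-2m,i+1}(t)$ with $\alpha_m \geq 0$. The key polynomial identity, verified by writing each factor as $(1-t^k)/(1-t)$, is
\[
(1+t+\cdots+t^i)(1+t+\cdots+t^s) = (1+t+\cdots+t^{i+1})(1+t+\cdots+t^{s-1}) + t^s (1+t+\cdots+t^{i-s}),
\]
valid for $0 \leq s \leq i+1$ (empty sums being zero). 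Taking $s = s_m \in \{1,\ldots,i+1\}$ to be the residue of $e+1 = d'-i-2m+1$ modulo $i+1$, the identity specializes to
\[
(1+t+\cdots+t^i)\,P_{e,i+1}(t) = P_{e+i,i+1}(t) + t^{s_m}\, P_{e+i-2s_m,i+1}(t),
\]
the second term vanishing when $s_m = i+1$. Substituting then expresses $P_{d',i}(t)$ as the nonnegative combination $\sum_m \alpha_m \bigl( t^m P_{d'-2m,i+1}(t) + t^{m+s_m} P_{d'-2(m+s_m),i+1}(t) \bigr)$ of elements of $B_{d',i+1}$.

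The one delicate point is to verify that every index $m+s_m$ arising above actually lies in the allowed range $\{0,\ldots,\lfloor(d'+1)/2\rfloor\}$ for $B_{d',i+1}$, equivalently that $d'-2(m+s_m) \geq -1$. This reduces to $e+i-2s_m \geq -1$ for the relevant values of $e$, which holds by the construction of $s_m$, together with a short parity argument using that $d'-2(m+s_m)$ shares parity with $d'$. This index bookkeeping, together with a small adjustment to accommodate the boundary case $e = -1$ (where $P_{e,i+1}=1$ by convention), is the only technical obstacle; the rest of the argument is a direct computation.
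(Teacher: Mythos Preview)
Your proof is correct and follows essentially the same approach as the paper: both reduce to showing that each $P_{d',i}(t)$ lies in the nonnegative integer span of $B_{d',i+1}$, and both do so via the same key identity $(1+t+\cdots+t^b)(1+t+\cdots+t^a)=(1+t+\cdots+t^{b-1})(1+t+\cdots+t^{a+1})+t^b(1+t+\cdots+t^{a-b})$. The only difference is organizational---the paper iteratively applies the identity to pairs of factors of $P_{d,i}$ until only degree-$(i{+}1)$ factors remain, whereas you peel off one $(1+t+\cdots+t^i)$ factor at a time and induct on $q$; the ``parity'' remark and the $e=-1$ case are already automatic once $s_m$ is taken in $\{1,\ldots,i+1\}$, so no separate adjustment is needed.
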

For a different relation between face numbers and numbers of missing faces we refer to Nagel \cite{Nagel}.

In Section \ref{sec:prove} we prove Theorem \ref{thm:missingLBT} and discuss Conjecture \ref{conj:missingLBC}. In Section \ref{sec:discuss} we give evidence for Conjecture \ref{conj:missingLBCf_0} and reduce the flag case in Conjecture \ref{conj:missingLBCf_0} to the inequality for the number of edges.
In Section \ref{sec:hierarchy} we prove Proposition \ref{prop:g-hierarchy} and discuss Conjecture \ref{conj:missing-g-conj}, including its first open case, namely $\mathcal{HS}(2,4)$.

\section{Around $S(i,d)$}\label{sec:prove}
We start this section with a proof of Theorem \ref{thm:missingLBT} and end it with a few comments on Conjecture \ref{conj:missingLBC}.

Let $\Delta$ be a simplicial complex and $F\in \Delta$ a face.
The \emph{link} of $F$ in $\Delta$ is $\lk(F)=\lk(F,\Delta)=\{T\in \Delta: \ T\cap F=\emptyset, \ T\cup F\in \Delta\}$, its \emph{closed star} is $\cst(F)=\cst(F,\Delta)=\{T\in \Delta: \ T\cup F\in \Delta\}$,
its (closed) \emph{antistar} is $\antist(F)=\antist(F,\Delta)=\{T\in \Delta: \ F\nsubseteq T \}$; they are simplicial complexes as well. The (open) \emph{star} of $F$ in $\Delta$ is the collection of sets $\st(F)=\st(F,\Delta)=\{T\in \Delta: \ F\subseteq T\}$.
\begin{lemma}\label{lem:H_d(ast)=0}
Theorem \ref{thm:missingLBT} follows from the special case of it where in addition to $\Delta \in \mathcal{C}(i,d)$ we assume that for every $F\in \Delta$ with $\dim(F)\leq i$, $\tilde{H}_d(\antist(F,\Delta);\mathbb{Z})=0$.
\end{lemma}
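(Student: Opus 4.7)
The plan is to induct on the total number of faces of $\Delta \in \mathcal{C}(i,d)$, reducing to the special case in which $\tilde{H}_d(\antist(F,\Delta);\Z)=0$ for every face $F\in \Delta$ of dimension $\leq i$. At the inductive step, if $\Delta$ already satisfies that vanishing hypothesis there is nothing to do: the special case gives (a)--(d) directly. Otherwise I would pick a witness face $F\in \Delta$ with $\dim(F)\leq i$ and $\tilde{H}_d(\antist(F,\Delta);\Z)\neq 0$, and set $\Delta':= \antist(F,\Delta)$. Because $F\in \Delta\setminus\Delta'$, the complex $\Delta'$ is a proper subcomplex of $\Delta$ and therefore has strictly fewer faces, so the induction hypothesis will apply to $\Delta'$ once I verify that $\Delta'\in \mathcal{C}(i,d)$.

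The central check is that $\Delta'$ lies in $\mathcal{C}(i,d)$. Nonvanishing of $\tilde{H}_d(\Delta';\Z)$ holds by the choice of $F$ and forces $\Delta'$ to contain a $d$-face, so, being a subcomplex of $\Delta$, it is exactly $d$-dimensional. The nontrivial part is to rule out missing faces of $\Delta'$ of dimension $>i$. For a candidate missing face $G$ of $\Delta'$: if $G\notin \Delta$, then $G$ is already a missing face of $\Delta$ (its boundary lies in $\Delta'\subseteq \Delta$), so $\dim G\leq i$; if $G\in \Delta$, then $F\subseteq G$, and for any $v\in G\setminus F$ the proper face $G\setminus\{v\}$ still contains $F$ and hence fails to lie in $\Delta'$, contradicting $\partial G\subseteq \Delta'$. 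Thus $G=F$ and again $\dim G\leq i$. This short case analysis is essentially the whole content of the lemma.

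Transferring the inductive conclusions for $\Delta'$ back to $\Delta$ is then routine. Parts (a), (b), (c) follow at once from $\f_j(\Delta)\geq \f_j(\Delta')\geq \f_j(S(i,d))$, the first inequality being $\Delta'\subseteq \Delta$ and the second the induction hypothesis. For part (d), if $i\mid (d+1)$ and $\f_j(\Delta)=\f_j(S(i,d))$ for every $j$, then the squeeze $\f_j(S(i,d))=\f_j(\Delta)\geq \f_j(\Delta')\geq \f_j(S(i,d))$ forces $\f_j(\Delta')=\f_j(\Delta)$ for all $j$; combined with $\Delta'\subseteq \Delta$ this gives $\Delta'=\Delta$, contradicting $F\in \Delta\setminus\Delta'$. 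Hence in the equality case $\Delta$ must have satisfied the extra vanishing hypothesis in the first place, and the special case yields $\Delta=S(i,d)$. The only non-bookkeeping step in the entire argument is the missing-face classification for $\antist(F,\Delta)$ in the middle paragraph; that is where I expect the only subtlety to be.
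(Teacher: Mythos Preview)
Your argument is correct and is essentially the paper's proof, just written out in more detail: both induct on the total number of faces and reduce to $\Delta'=\antist(F,\Delta)$, and your case analysis showing that the only new missing face of $\Delta'$ is $F$ itself is exactly what the paper summarizes in one sentence. Your explicit treatment of part~(d) via the squeeze $\f_j(\Delta)=\f_j(\Delta')$ forcing $\Delta'=\Delta$ (a contradiction) is a nice clarification that the paper leaves implicit.
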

\begin{proof}
Notice that $F$ is the only missing face in $\antist(F,\Delta)$ which is not missing in $\Delta$. As $\dim(F)\leq i$,
$\antist(F,\Delta)$ has no missing faces of dimension $>i$. As $\f_j(\Delta)\geq \f_j(\antist(F,\Delta))$ for every $j$, if $\tilde{H}_d(\antist(F,\Delta);\mathbb{Z})\neq 0$ then Theorem \ref{thm:missingLBT} would follow by induction on the sum of face numbers, $\f=\sum_j \f_j$.
\end{proof}

\begin{lemma}\label{lem:H(lk)_is_not_0}
Let $\Delta \in \mathcal{C}(i,d)$. If $F\in \Delta$ with $\tilde{H}_d(\antist(F,\Delta);\mathbb{Z})=0$ then $\tilde{H}_{d-|F|}(\lk(F,\Delta);\mathbb{Z})\neq 0$.
\end{lemma}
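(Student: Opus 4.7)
The plan is to exploit the standard decomposition $\Delta = \cst(F,\Delta)\cup \antist(F,\Delta)$, whose intersection is the join $\partial F * \lk(F,\Delta)$, and combine Mayer--Vietoris with the K\"unneth formula for joins. The point is that the closed star is a cone (hence acyclic), so the Mayer--Vietoris connecting homomorphism must absorb all of $\tilde{H}_d(\Delta)$ into the homology of the intersection, which in turn is a suspension-type shift of the link's homology.

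Concretely, I would first verify the decomposition. Any face $T\in\Delta$ either contains $F$ (in which case $T\cup F=T\in \Delta$, so $T\in \cst(F,\Delta)$) or does not (in which case $T\in \antist(F,\Delta)$). For the intersection, writing $T=(T\cap F)\sqcup(T\setminus F)$, one sees $T\in \cst(F,\Delta)\cap \antist(F,\Delta)$ iff $T\cap F \subsetneq F$ and $T\setminus F\in\lk(F,\Delta)$, i.e.\ $\cst(F,\Delta)\cap \antist(F,\Delta)=\partial F * \lk(F,\Delta)$. Similarly $\cst(F,\Delta)=\overline{F}*\lk(F,\Delta)$, where $\overline{F}$ is the full simplex on the vertices of $F$; this is a simplicial cone and therefore contractible.

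Next I would write down the reduced Mayer--Vietoris sequence for the cover $\{\cst(F,\Delta),\antist(F,\Delta)\}$:
$$\cdots\to \tilde{H}_d(\cst(F,\Delta))\oplus \tilde{H}_d(\antist(F,\Delta))\to \tilde{H}_d(\Delta)\stackrel{\partial}{\to} \tilde{H}_{d-1}(\partial F *\lk(F,\Delta))\to\cdots.$$
The first term vanishes because $\cst(F,\Delta)$ is contractible and $\tilde{H}_d(\antist(F,\Delta))=0$ by hypothesis, so $\partial$ is injective. Since $\Delta\in \mathcal{C}(i,d)$ we have $\tilde{H}_d(\Delta)\neq 0$, hence $\tilde{H}_{d-1}(\partial F *\lk(F,\Delta))\neq 0$.

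Finally I would identify the last group with $\tilde{H}_{d-|F|}(\lk(F,\Delta))$. Because $\partial F$ is the boundary of a $(|F|-1)$-simplex, it is a $(|F|-2)$-sphere, so $\tilde{H}_*(\partial F;\Z)$ is free and concentrated in degree $|F|-2$. The K\"unneth formula for joins,
$$\tilde{H}_n(X*Y;\Z)\cong \bigoplus_{p+q=n-1}\tilde{H}_p(X;\Z)\otimes \tilde{H}_q(Y;\Z)\oplus \bigoplus_{p+q=n-2}\mathrm{Tor}\bigl(\tilde{H}_p(X;\Z),\tilde{H}_q(Y;\Z)\bigr),$$
then collapses (no Tor terms, a single tensor factor $\Z$) to give $\tilde{H}_{d-1}(\partial F *\lk(F,\Delta))\cong \tilde{H}_{d-|F|}(\lk(F,\Delta))$, and the lemma follows. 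The only step requiring mild care is bookkeeping at the boundary of the indexing, in particular interpreting $\partial F$ when $|F|=1$ as the $(-1)$-sphere so that the join formula still yields the correct degree shift; this is just a sanity check rather than a real obstacle.
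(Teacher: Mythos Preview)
Your proof is correct and follows the same route as the paper: the decomposition $\Delta=\cst(F,\Delta)\cup\antist(F,\Delta)$ with intersection $\partial F*\lk(F,\Delta)$, Mayer--Vietoris together with contractibility of $\cst(F,\Delta)$ to inject $\tilde{H}_d(\Delta)$ into $\tilde{H}_{d-1}(\partial F*\lk(F,\Delta))$, and then K\"unneth for the join with the sphere $\partial F$. You simply spell out in more detail what the paper compresses into three sentences.
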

\begin{proof}
Consider the Mayer-Vietoris long exact sequence for the union $\Delta=\antist(F,\Delta)\cup_{\partial F* \lk(F,\Delta)}\cst(F,\Delta)$. As $\cst(F,\Delta)$ is contractible, $\tilde{H}_d(\Delta,\mathbb{Z})$ injects into $\tilde{H}_{d-1}(\partial F* \lk(F,\Delta))$. As $\partial F$ is a sphere of dimension $|F|-2$, the K\"{u}nneth formula implies that $\tilde{H}_{d-|F|}(\lk(F,\Delta);\mathbb{Z})\neq 0$.
\end{proof}
\begin{lemma}\label{lem:missing_faces(lk)}
Let $\Delta \in \mathcal{C}(i,d)$. If $F\in \Delta$ then there are no missing faces of dimension $>i$ in $\lk(F,\Delta)$.
\end{lemma}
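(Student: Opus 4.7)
The goal is to show that every missing face $G$ of $\lk(F,\Delta)$ satisfies $|G|\leq i+1$. Let $G$ be such a missing face: by definition $\partial G\subseteq \lk(F,\Delta)$ and $G\notin \lk(F,\Delta)$, and the vertices of $G$ lie in $\V(\lk(F,\Delta))$, so in particular $G\cap F=\emptyset$. Note that $\partial G\subseteq \lk(F,\Delta)$ forces $\partial G\subseteq \Delta$ and also $G'\cup F\in \Delta$ for every proper subset $G'\subsetneq G$.

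I would split into two cases according to which of the two defining conditions of the link fails for $G$.

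\textbf{Case 1: $G\notin \Delta$.} Then $G$ is itself a minimal non-face of $\Delta$ (since $\partial G\subseteq \Delta$), hence a missing face of $\Delta$. The assumption $\Delta\in \mathcal{C}(i,d)$ gives $|G|\leq i+1$, as desired.

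\textbf{Case 2: $G\in \Delta$ but $G\cup F\notin \Delta$.} Pick a minimal subset $H\subseteq G\cup F$ with $H\notin \Delta$. By minimality every proper subset of $H$ lies in $\Delta$, so $H$ is a missing face of $\Delta$, hence $|H|\leq i+1$. Since $G\in\Delta$ and $F\in\Delta$, $H$ must meet both $G$ and $F$; in particular $|H\cap F|\geq 1$. The key step is to show $G\subseteq H$: for each $v\in G$ we have $(G\setminus\{v\})\cup F\in\Delta$ (from $\partial G\subseteq \lk(F,\Delta)$), and since $H\subseteq G\cup F$, if $v\notin H$ then $H\subseteq (G\setminus\{v\})\cup F\in\Delta$, contradicting $H\notin\Delta$. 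So $G\subseteq H$, and because $H$ contains at least one vertex of $F$ disjoint from $G$, we get $|G|<|H|\leq i+1$, so $|G|\leq i$.

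Both cases yield $\dim(G)\leq i$, proving the lemma. The argument is essentially a short diagram-chase on subsets and I do not expect a serious obstacle; the only point that requires care is the bookkeeping in Case 2 showing that $G$ itself must be contained in the witnessing missing face $H$ of $\Delta$.
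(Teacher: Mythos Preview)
Your proof is correct. The two-case split is natural and the bookkeeping in Case~2 is clean: the key inclusion $G\subseteq H$ follows exactly as you say from $(G\setminus\{v\})\cup F\in\Delta$ for each $v\in G$.

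The paper argues a little differently. It assumes by contradiction that a missing face $T$ of the link has $\dim T>i$; then $T\in\Delta$ (your Case~1 elimination), and it \emph{builds up} to $T\cup F\in\Delta$ by adjoining the vertices of $F$ one at a time, noting at each step that the boundary of the enlarged set lies in $\Delta$ and its dimension exceeds $i$, so the set itself cannot be missing. You instead go \emph{downward}: pick a minimal non-face $H\subseteq G\cup F$ and show it must swallow all of $G$. Both arguments use the hypothesis in the same way (any minimal non-face has at most $i+1$ vertices), and both are short; yours has the mild bonus of showing that in Case~2 one actually gets $\dim G\leq i-1$, and it avoids the inductive step, while the paper's version is perhaps more constructive in that it exhibits directly why $G\cup F$ lands in $\Delta$.
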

\begin{proof}
Assume by contradiction that $T$ is missing in $\lk(F,\Delta)$, of dimension $>i$. Then $T\in \Delta$ and $F\neq \emptyset$. Let $v\in F$. Then $\partial(\{v\}\cup T)\subseteq \Delta$, hence $\{v\}\cup T\in \Delta$ as $\Delta$ has no missing faces of dimension $>i$. Similarly, if $v\neq u\in F$, we already argued that $\partial(\{v,u\}\cup T)\subseteq \Delta$, hence $\{v,u\}\cup T\in \Delta$, and inductively we conclude that $T\cup F\in \Delta$, contradicting the assumption that $T$ is missing in $\lk(F,\Delta)$.  \end{proof}
\begin{lemma}\label{lem:f-recurrence}
For all integers $d\geq 0$, $0<i\leq d+1$ and $j$,
$$\f_j(S(i,d))=\f_j(S(i,d-1))+\f_{j-1}(S(i,d-1))+\f_{j-r}(S(i,d-r))$$
where $d+1=qi+r$ as in the definition of $S(i,d)$ ($1\leq r\leq i$).
\end{lemma}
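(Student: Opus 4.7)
The plan is to prove the recurrence by a star/antistar decomposition at a well-chosen vertex of $S(i,d)$. Recall that for any simplicial complex $\Delta$ and vertex $v\in \Delta$, splitting the $j$-faces according to whether they contain $v$ gives the basic identity
$$\f_j(\Delta)=\f_j(\antist(v,\Delta))+\f_{j-1}(\lk(v,\Delta)).$$
I would apply this to $\Delta=S(i,d)=(\partial\sigma^i)^{*q}*\partial\sigma^r$ with $v$ any vertex of the last factor $\partial\sigma^r$. Using the standard fact that link and antistar commute with joins, one computes $\lk(v,\Delta)=(\partial\sigma^i)^{*q}*\lk(v,\partial\sigma^r)=(\partial\sigma^i)^{*q}*\partial\sigma^{r-1}$ and $\antist(v,\Delta)=(\partial\sigma^i)^{*q}*\antist(v,\partial\sigma^r)=(\partial\sigma^i)^{*q}*\sigma^{r-1}$ (with the convention $\partial\sigma^0=\{\emptyset\}$ for the $r=1$ case, so that joining with it is the identity operation).

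Next I would identify each piece with an $S(i,\cdot)$. Writing $d=qi+(r-1)$: if $r\geq 2$ this already exhibits $d$ in the canonical form $d=qi+r'$ with $1\leq r'=r-1\leq i$, so $\lk(v,\Delta)=S(i,d-1)$; if $r=1$ we rewrite $d=(q-1)i+i$ and obtain $\lk(v,\Delta)=(\partial\sigma^i)^{*q}=S(i,d-1)$, so the identification $\lk(v,\Delta)=S(i,d-1)$ holds uniformly. Likewise, since $d-r=qi-1$ satisfies $d-r+1=(q-1)i+i$, we have $(\partial\sigma^i)^{*q}=S(i,d-r)$.

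It then remains to relate $\f_j(\sigma^{r-1}*X)$ to $\f_j(\partial\sigma^{r-1}*X)$ for $X=(\partial\sigma^i)^{*q}$. Since $\sigma^{r-1}$ has exactly one face more than $\partial\sigma^{r-1}$, namely its full $(r-1)$-face on $r$ vertices, the extra $j$-faces of $\sigma^{r-1}*X$ are precisely those of the form (full $(r-1)$-face)$\cup G$ for $G$ a face of $X$ of dimension $j-r$, giving
$$\f_j(\sigma^{r-1}*X)=\f_j(\partial\sigma^{r-1}*X)+\f_{j-r}(X).$$
Substituting back, together with $\partial\sigma^{r-1}*(\partial\sigma^i)^{*q}=S(i,d-1)$ and $(\partial\sigma^i)^{*q}=S(i,d-r)$, yields the claimed recurrence.

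I do not expect a real obstacle here; the only delicate point is the bookkeeping at $r=1$, where $\partial\sigma^0$ must be interpreted as $\{\emptyset\}$ and the indexing of $S(i,d-1)$ shifts from $(q,r)$ to $(q-1,i)$. Once the join conventions are set up consistently, both the link and antistar identifications remain valid across all $r\in\{1,\dots,i\}$, and the recurrence follows in a single line of arithmetic.
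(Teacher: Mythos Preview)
Your proof is correct and is essentially the same argument as the paper's, just phrased in link/antistar language. The paper partitions the $j$-faces of $S(i,d)$ directly into three groups---those containing $v$, those containing the opposite face $F$ of $\sigma^r$, and those containing neither---which is exactly the partition you obtain after your two successive splits (first by $v$, then by the top face of $\sigma^{r-1}$).
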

\begin{proof}
In the definition of $S(i,d)$ let the set of vertices of $\sigma^r$ be $F\uplus \{v\}$. Then the number of $j$-faces in $S(i,d)$ containing $v$ is $\f_{j-1}(S(i,d-1))$, of those containing $F$ is $\f_{j-r}(S(i,d-r))$ and of those containing neither $v$ nor $F$ is $\f_j(S(i,d-1))$. The assertion thus follows.
\end{proof}
%\begin{proposition}\label{prop:r=0_case}
%\end{proposition}
\begin{proof}[Proof of Theorem \ref{thm:missingLBT}(a)] This is the case $r=i$.
The case $d=0$ is trivial: here $i=r=1$, $q=0$ and $S(1,0)$ consists of two points. We proceed by double induction on $d$ and on $\f=\sum_j \f_j(\Delta)$. By induction on $\f$ we may assume that for every $v\in \Delta$, $\tilde{H}_d(\antist(v,\Delta);\mathbb{Z})=0$; see Lemma \ref{lem:H_d(ast)=0}. By Lemmata \ref{lem:H(lk)_is_not_0}, \ref{lem:missing_faces(lk)} and the induction on $d$, we obtain the following inequality for every $j$ and every $v\in \Delta$:
\begin{eqnarray}\label{eq:ineqThm(a)}
\f_j(\Delta)=\f_j(\st(v))+\f_j(\lk(v))+\f_j(\antist(v)\setminus \lk(v))=\nonumber\\
\f_{j-1}(\lk(v))+\f_j(\lk(v))+\f_j(\antist(v)\setminus \lk(v))\geq \nonumber\\ \f_{j-1}(S(i,d-1))+\f_j( S(i,d-1))+\f_j(\antist(v)\setminus \lk(v)).
\end{eqnarray}
For any face $F\in \antist(v)\setminus \lk(v)$, $\f_j(\antist(v)\setminus \lk(v))\geq \f_j(\st(F))=\f_{j-|F|}(\lk(F))$.

If $|F|=l\leq r$ then by Lemma \ref{lem:f-recurrence}, and the following observation, Theorem \ref{thm:missingLBT}(a) follows: notice that $\f_{j-l}(S(i,d-l))\geq \f_{j-r}(S(i,d-r))$, as for a fixed subset $T$ of the vertices of $\sigma^r$ of size $r-l$, the map $A\mapsto A\cup T$ is an injection from the $(j-r)$-faces of $S(i,d-r)$ into the
$(j-l)$-faces of $S(i,d-l)$.

When $r=i$ a minimal face $F\in \antist(v)\setminus \lk(v)$ is of size $\leq r$, as otherwise $F\cup \{v\}$ would be a missing face of $\Delta$ of dimension $>i$, a contradiction.
\end{proof}
To prove Theorem \ref{thm:missingLBT}(b), we use the following result of Bj\"{o}rner et. al. \cite{Bjorner-NoteAlexanderDuality}, as in Meshulam \cite[Remark in Section 2]{Meshulam-DomAndHom}.
\begin{lemma}(\cite[Theorem 2]{Bjorner-NoteAlexanderDuality})\label{lem:Bjorner-NoteAlexanderDuality}
Let $\Delta$ be a simplicial complex on a vertex set $V$ of size $n$, and $C$ be its set of missing faces. Let $\Gamma$ be the simplicial complex on the vertex set $C$, whose nonempty faces are the $F\subseteq C$ such that the union $\cup_{u\in F}\{u\}\neq V$. If $V\notin \Delta$ then $\tilde{H}_j(\Delta;\Z)\cong\tilde{H}^{n-j-3}(\Gamma;\Z)$ for every $j$. ($\tilde{H}^j(\cdot;\Z)$ denotes $j$-th cohomology.)
\end{lemma}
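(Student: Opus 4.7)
My plan is to deduce this statement from the classical combinatorial Alexander duality together with the Nerve Theorem.

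First, I would recall the Alexander dual $\Delta^{\vee}=\{F\subseteq V:\ V\setminus F\notin\Delta\}$. The combinatorial Alexander duality asserts that, when $V\notin\Delta$, $\tilde{H}_j(\Delta;\Z)\cong\tilde{H}^{n-j-3}(\Delta^{\vee};\Z)$ for every $j$. So it suffices to produce a (co)homology isomorphism $\tilde{H}^{k}(\Delta^{\vee};\Z)\cong\tilde{H}^{k}(\Gamma;\Z)$ for every $k$, and in fact I would aim for an honest homotopy equivalence $\Delta^{\vee}\simeq\Gamma$.

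Next, I would describe $\Delta^{\vee}$ as a union of full simplices indexed by missing faces. A set $F\subseteq V$ belongs to $\Delta^{\vee}$ iff $V\setminus F$ is a non-face, iff $V\setminus F\supseteq c$ for some $c\in C$, iff $F\subseteq V\setminus c$ for some $c\in C$. Writing $2^{X}$ for the full simplex on a vertex set $X$, this reads $\Delta^{\vee}=\bigcup_{c\in C}2^{V\setminus c}$. I would then recognise $\Gamma$ as the nerve of this cover: for $F\subseteq C$, $\bigcap_{c\in F}2^{V\setminus c}=2^{V\setminus\bigcup_{c\in F}c}$, and this is a nonempty simplicial complex precisely when $\bigcup_{c\in F}c\neq V$ --- exactly the defining condition of a face of $\Gamma$.

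The last step is to apply the Nerve Theorem. Each nonempty intersection $\bigcap_{c\in F}2^{V\setminus c}$ is itself a full simplex, hence contractible, so $\Gamma$ and $\Delta^{\vee}$ are homotopy equivalent, and combining this with Alexander duality gives the stated isomorphism. I do not foresee a serious obstacle here, since the hypotheses of the Nerve Theorem hold almost tautologically --- intersections of full simplices on subsets of $V$ are again full simplices on subsets of $V$.
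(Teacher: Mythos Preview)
The paper does not supply its own proof of this lemma; it is quoted verbatim as \cite[Theorem~2]{Bjorner-NoteAlexanderDuality} and used as a black box in the proof of Theorem~\ref{thm:missingLBT}(b). Your argument---combinatorial Alexander duality to pass from $\Delta$ to $\Delta^{\vee}$, then the Nerve Theorem applied to the cover $\Delta^{\vee}=\bigcup_{c\in C}2^{V\setminus c}$ to identify $\Delta^{\vee}\simeq\Gamma$---is correct and is essentially the proof given in the cited Bj\"orner et~al.\ paper, so there is nothing to compare against within this manuscript.

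One small point worth making explicit in your write-up: the hypothesis $V\notin\Delta$ is used twice, once so that Alexander duality holds in the stated form, and once to guarantee $C\neq\emptyset$, so that the family $\{2^{V\setminus c}\}_{c\in C}$ is genuinely a cover. You handled the borderline case $\bigcup_{c\in F}c=V$ correctly (the intersection $2^{\emptyset}$ has empty geometric realization, so $F$ is not a face of the nerve), but it would not hurt to say this in one sentence rather than leave it implicit.
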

\begin{proof}[Proof of Theorem \ref{thm:missingLBT}(b)]
As $\Delta\in \mathcal{C}(i,d)$, $\Delta$ is not a simplex and Lemma \ref{lem:Bjorner-NoteAlexanderDuality} applies. Thus $\tilde{H}^{n-d-3}(\Gamma)\neq 0$. In particular, $\Gamma$ does not have a complete $(n-d-2)$-skeleton, that is, there exists $A\subseteq C$, $|A|=n-d-1$ and $\cup_{a\in A}\{a\}= V$. Each element in $C$ has size $\leq i+1$, therefore $(i+1)(n-d-1)\geq n$, equivalently $n\geq \frac{(d+1)(i+1)}{i}=d+1+(q+\frac{r}{i})$. Thus $\f_0(\Delta)\geq d+1+q+1=\f_0(S(i,d))$.
\end{proof}
\begin{proof}[Proof of Theorem \ref{thm:missingLBT}(c)]
Note that the proof of Theorem \ref{thm:missingLBT}(a) would fail for $r\neq i$ only if for every vertex $v\in \Delta$ all the minimal faces in $\antist(v)\setminus \lk(v)$ have dimension $\geq r$. This happens only if all the missing faces in $\Delta$ have dimension $>r$, in which case $\Delta$ has a complete $r$-skeleton, and by Theorem \ref{thm:missingLBT}(b), for any $1\leq j\leq r$,
$\f_j(\Delta)\geq \binom{\f_0(S(i,d))}{j+1}\geq\f_j(S(i,d))$.
\end{proof}
\begin{proof}[Proof of Theorem \ref{thm:missingLBT}(d)]
This follows from the proof of Theorem \ref{thm:missingLBT}(a).
Indeed, for a vertex $v$ and a face $F\in \antist(v)$, (\ref{eq:ineqThm(a)}) and the argument following it imply $\st(F)=\antist(v)\setminus \lk(v)$, $|F|=r=i$, and by induction on the dimension $\lk(F)=S(i,d-i)$. Thus, $\Delta=\partial(F\cup \{v\})*\lk(F)=S(i,d)$.
\end{proof}
In Conjecture \ref{conj:missingLBC}, the following cases are of
particular interest: The case $i=d-1$ includes prime spheres. Recall that a triangulated sphere is \emph{prime} if it has no missing facets and it is not the boundary of a simplex. The conjecture
implies that the join of boundaries of a triangle and a $(d-1)$-simplex minimizes the face numbers among prime $d$-spheres for $d>2$ (and the octahedron minimizes the face numbers among prime $2$-spheres, which coincides with the proven $i=1$ case).
The case $i=d$ implies that among all simplicial $d$-spheres different from the boundary of the $(d+1)$-simplex, the bipyramid minimizes the face numbers. This case easily follows from Theorem \ref{thm:stacked,Kalai}.

\section{Around $S(i,d,n)$}\label{sec:discuss}
We discuss the cases $i=d+1,d,d-1,1$ in order, indicating by bullets the cases where Conjecture \ref{conj:missingLBCf_0} is known. We end this section with a reduction of the case $i=1$ of this conjecture to the inequality for $\f_1$.\\
$\bullet$
The cases $i\in\{d+1,d\}$ of Conjecture \ref{conj:missingLBCf_0} follow from Theorem \ref{thm:stacked,Kalai}.
%Note that in these cases the $g$-vector of $S(i,d,n)$ vanishes from $\g_2$ onwards, and that for any simplicial complex $\Delta$, $\g_1(\Delta)$ is a function of $\f_0(\Delta)$ and $d$. As the face numbers of a complex are positive linear combinations of the entries of its $g$-vector (with coefficients independent of the complex), Conjecture \ref{conj:missingLBCf_0} would follow immediately
%by minimizing the $g$-vector, provided
%we knew that all components of the $g$-vector of a homology sphere are nonnegative. This is conjectured to be the case, known as McMullen's generalized lower bound conjecture \cite{McMullen-g-conj}.
\\
$\bullet$
The case $i=d-1$ of this conjecture holds for $d\leq 4$ and for $\Delta$ the boundary of a simplicial polytope.
\\
In these cases the $g$-vector of $\Delta$ is nonnegative: by a rigidity argument for $d\leq 4$ \cite{Kalai-LBT,Gromov}, and by the necessity part of the $g$-theorem for boundaries of simplicial polytopes \cite{Stanley-gThm}. Also, as $i<d$, $\Delta$ is not stacked, hence $\g_2(\Delta)\geq 1$ by the equality case in Theorem \ref{thm:stacked,Kalai}. But $\g_2(S(d-1,d,n))=1$ and $\g_j(S(d-1,d,n))=0$ for $j>2$, thus $\f_j(\Delta)\geq \f_j(S(d-1,d,n))$ for every $j$, by the same argument that showed how to conclude Conjecture \ref{conj:missingLBCf_0} from Conjecture \ref{conj:missing-g-conj} (see Introduction).

In \cite{Nevo-Novinsky} Novinsky and the author showed that if $\Delta$ is a prime $d$-dimensional homology sphere with $\g_2(\Delta)=1$ then either $\Delta=S(d-1,d,n)$ (if $n>d+3$) or $\Delta=S(i,d)$ for $\frac{d+1}{2}\leq i\leq d-1$ (if $n=d+3$). In the later case $S(d-1,d)=S(d-1,d,d+3)$ has the smallest $g$-vector. Thus, among boundaries of prime polytopes, $S(d-1,d,n)$ is the \emph{unique} example with equalities in Conjecture \ref{conj:missingLBCf_0} for all $j$'s.

Unlike the cases $i\in\{d+1,d\}$ where equality for some $\f_j$ ($j>0$) implied equalities for all $\f_j$'s, here equality for $\f_1$ holds for all
$\Delta=S(i,d)$, $\frac{d+1}{2}\leq i\leq d-1$ (where $n=d+3$), but not for $\f_2$ for example.

%Note that the conjectured inequalities on the $f$-vector would follow from the corresponding inequalities on the $h$-vector. We are tempted to pose the following inequalities on $h$-vectors.

An intermediate problem, stronger than Conjecture \ref{conj:missingLBCf_0} and weaker than Conjecture \ref{conj:missing-g-conj} is the following.
\begin{problem}
Show that for any $i,d,n$ and $\Delta\in \mathcal{HS}(i,d,n)$, $\h(\Delta)\geq \h(S(i,d,n))$ componentwise.
In particular, $\h(\Delta)\geq \h(S(i,d))$ follows.
\end{problem}
For $\Delta\in \mathcal{HS}(1,d,n)$, and more generally for $\Delta$ a doubly Cohen-Macaulay flag $d$-complex, Athanasiadis very recently proved that $\h(\Delta)\geq \h(S(1,d))$ \cite{Athanasiadis}.

We now discuss the interesting case $i=1$ (i.e. flag homology spheres) in more details.
%First we recall the known cases, then discuss the case of equality. We end the section with a reduction of Conjecture \ref{conj:missingLBCf_0} to the inequality for the number of edges.
\\
%\begin{remarks}
%\begin{enumerate}
%\item{}Conjecture \ref{conj:missingLBCf_0} with $i=1$ follows from Gal \cite[Conjecture 2.1.7]{Gal}. To see this, express the face numbers of $\Delta\in \mathcal{HS}(1,d,n)$ as a linear combination of the coefficients in the $\gamma$-polynomial of $\Delta$, specifically: $$\sum_{j=0}^{d+1}\f_{j-1}t^j = t^{d+1}\sum_{i=0}^{\lfloor\frac{d+1}{2}\rfloor}\gamma_{i}(1+\frac{1}{t})^i(2+\frac{1}{t})^{d+1-2i}
%    .$$
%    Note that $\gamma_0=1$ and $\gamma_1=n-2(d+1)$; a direct computation shows that they already contribute $\f_j(S(1,d,n))$ to $\f_j(\Delta)$, for every $j$. If indeed $\gamma_i(\Delta)\geq 0$ for any $i$, then $\f_j(\Delta)\geq \f_j(S(1,d,n))$ would follow for any $j$.
%\item{}
$\bullet$
Conjecture \ref{conj:missingLBCf_0} for flag homology spheres of dimension $\leq 4$ holds.
\\
Gal showed nonnegativity of the $\gamma$-polynomial in this case \cite{Gal}, based on the Davis-Okun proof \cite{Davis-Okun} of the $3$-dimensional case of Charney-Davis conjecture for homology flag spheres \cite[Conjecture D]{Charney-Davis}.
%\item{}
\\
$\bullet$
There is no conjecture for a characterization of the extremal cases when $i=1$.
\\
Here are some examples of extremal cases, i.e.
of $\Delta\in \mathcal{HS}(1,d,n)$ with $\f_j(\Delta)= \f_j(S(1,d,n))$ for any $j$. Note that equalities hold for any $2$-sphere.
Note further that subdividing edges of the octahedron always yield a flag $2$-sphere, and now subdivide edges until a sphere with $n$ vertices is obtained. Clearly, if equalities hold for $\Delta\in \mathcal{HS}(1,d,n)$, then equalities hold for its suspension $\Sigma(\Delta)\in \mathcal{HS}(1,d+1,n+2)$. Thus, more examples with equalities are obtained by repeated suspension of the above $2$-spheres.

There are more equality cases, not based on suspension.
Contracting an edge in $\Delta$ which is not contained in an induced $4$-cycle results in a flag homology sphere $\Delta'$ with one vertex less. If the link of such edge is the boundary of a crosspolytope (called also octahedral sphere) then it is easy to check that $\f_j(\Delta)-\f_j(S(1,d,\f_0(\Delta)))= \f_j(\Delta')-\f_j(S(1,d,\f_0(\Delta')))$. Start with an octahedral sphere and repeat performing the inverse of this operation. This produces new examples with equalities for all the $\f_j$'s, which are \emph{not} suspensions, in any dimension.
%\end{enumerate}
%\end{remarks}

Barnette's proof of the LBT \cite{Barnette-LBT} made use of a reduction to the inequality for $\f_1$, known as the McMullen-Perles-Walkup reduction (MPW). The idea there is to double count the pairs ``a vertex in a $k$-face". This idea works here as well, reducing the case $i=1$ in Conjecture \ref{conj:missingLBCf_0} to the inequality for $\f_1$.
\begin{proposition}\label{prop:MPWclique}
If $\f_1(\Delta)\geq \f_1(S(1,d,n))$ holds for all $d,n$ and $\Delta\in \mathcal{HS}(1,d,n)$, then $\f_k(\Delta)\geq \f_k(S(1,d,n))$ holds for all $d,n,k$ and $\Delta\in \mathcal{HS}(1,d,n)$.
\end{proposition}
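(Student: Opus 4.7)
The approach is the McMullen--Perles--Walkup (MPW) double-counting reduction, combined with the observation that $\f_{k-1}(S(1,d-1,m))$ is an affine function of $m$ with nonnegative slope.

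I would induct on $d$, the base case $d=1$ being trivial since $\mathcal{HS}(1,1,n)=\{C_n\}=\{S(1,1,n)\}$. For the inductive step, double-counting incidences between vertices and $k$-faces gives
\[
(k+1)\f_k(\Delta)=\sum_{v\in\V(\Delta)}\f_{k-1}(\lk(v,\Delta)).
\]
Since flagness passes to links (Lemma~\ref{lem:missing_faces(lk)} with $i=1$) and homology-sphere links are homology spheres, each $\lk(v,\Delta)$ belongs to $\mathcal{HS}(1,d-1,m_v)$ where $m_v:=\f_0(\lk(v,\Delta))$ is the degree of $v$, and the induction hypothesis yields $\f_{k-1}(\lk(v,\Delta))\ge\f_{k-1}(S(1,d-1,m_v))$.

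The crux is then to show that $m\mapsto\f_{k-1}(S(1,d-1,m))$ has the form $a+bm$ with $b\ge 0$, valid throughout the admissible range $m\ge 2d$. For $m>2d$ one has $S(1,d-1,m)=(\partial\sigma^1)^{*(d-2)}*C_{m-2d+4}$, and the only join-factor depending on $m$ is the cycle, whose $f$-polynomial $1+m't+m't^2$ (with $m'=m-2d+4$) is linear in $m$; the join formula for face numbers then yields affineness, with
\[
b=\binom{d-2}{k-1}2^{k-1}+\binom{d-2}{k-2}2^{k-2}\ge 0.
\]
A brief Pascal-identity check confirms that this formula agrees at $m=2d$ with the crosspolytope value $\f_{k-1}((\partial\sigma^1)^{*d})=\binom{d}{k}2^k$, so the affineness extends to the entire range $m\ge 2d$.

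Combining $\sum_v m_v=2\f_1(\Delta)$, the hypothesis $\f_1(\Delta)\ge\f_1(S(1,d,n))$, and $b\ge 0$ gives
\[
(k+1)\f_k(\Delta)\ge na+2b\f_1(\Delta)\ge na+2b\f_1(S(1,d,n)).
\]
Applying the same double-counting to $S(1,d,n)$ itself---whose vertex links are again of the form $S(1,d-1,m')$ with $m'\ge 2d$ (for $n>2d+2$, the $2(d-1)$ vertices in the $\partial\sigma^1$-factors have link $S(1,d-1,n-2)$ and the $C_{n-2d+2}$-vertices have link $S(1,d-1,2d)$; for $n=2d+2$, all links equal $S(1,d-1,2d)$)---the same affineness yields $(k+1)\f_k(S(1,d,n))=na+2b\f_1(S(1,d,n))$, and comparison closes the induction. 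The main obstacle is the affineness claim in the middle paragraph; once in hand, everything else is standard MPW.
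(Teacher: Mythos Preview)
Your proof is correct and follows the same MPW double-counting reduction as the paper: both induct on $d$, bound $(k+1)\f_k(\Delta)=\sum_v \f_{k-1}(\lk(v))$ from below via the inductive hypothesis applied to the flag links, and then use the affineness $\f_{k-1}(S(1,d-1,m))=a+bm$ together with $\sum_v m_v=2\f_1(\Delta)$ and the $\f_1$ hypothesis. The paper carries this out by writing $\f_k(S(1,d,n))=a_{k,d}n-b_{k,d}$, deriving the recurrences $(k{+}1)a_{k,d}=2(2d{-}1)a_{k-1,d-1}-b_{k-1,d-1}$ and $(k{+}1)b_{k,d}=4(d{+}1)(d{-}1)a_{k-1,d-1}$, and then verifying them with a page of binomial-coefficient manipulations. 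Your closing step is cleaner: you apply the very same double-counting identity to $S(1,d,n)$, note that each of its vertex links is itself an $S(1,d-1,m')$ with $m'\ge 2d$, and read off $(k+1)\f_k(S(1,d,n))=na+2b\f_1(S(1,d,n))$ directly from the affineness. This avoids the explicit recurrence check entirely, at the cost of needing the small observation (which you include) that the join formula for $S(1,d-1,m)$ evaluates at $m=2d$ to the crosspolytope value, i.e., that $C_4=(\partial\sigma^1)^{*2}$.
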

\begin{proof}
To obtain bounds of the form $\f_k\geq a_{k,d}\f_0 - b_{k,d}$, where
$a_{k,d}$ and $b_{k,d}$ are functions of $k$ and $d$,
the MPW double counting gives, summing over all vertices,
\begin{eqnarray*}
(k+1)\f_k(\Delta) = \sum_v \f_{k-1}(\lk(v,\Delta))\geq&&\\
\sum_v [a_{k-1,d-1}\f_0(\lk(v,\Delta))-b_{k-1,d-1}] =&&\\
a_{k-1,d-1}2\f_1(\Delta)-b_{k-1,d-1}\f_0(\Delta) \geq&&\\
2[(2d-1)\f_0(\Delta)-2(d+1)(d-1)]a_{k-1,d-1}-b_{k-1,d-1}\f_0(\Delta)
.&&
\end{eqnarray*}
We equate
$$(\mbox{k+1})a_{k,d}=2(2d-1)a_{k-1,d-1}-b_{k-1,d-1}$$ and $$(k+1)b_{k,d}=4(d+1)(d-1)a_{k-1,d-1},$$ with $a_{1,d}=2d-1$ and $b_{1,d}=2(d+1)(d-1)$.

We need to check now that the numbers $a_{k,d}$ and $b_{k,d}$ defined by the equations $\f_k(S(1,d,n))=a_{k,d}n-b_{k,d}$ for all $n$, indeed satisfy the above recurrence. We leave the case $k=d$ to the reader, and show the computation for $1<k<d$. It is based on two simple identities for binomial coefficients, namely $s\binom{t}{s}=t\binom{t-1}{s-1}$ and
$\binom{t+1}{s}=\binom{t}{s}+\binom{t}{s-1}$. Here are the details.
Note that
%{\setlength\arraycolsep{2pt}
\begin{eqnarray*}
\f_k\left(S\left(1,d,n\right)\right)= \binom{d-1}{k+1}2^{k+1}+\binom{d-1}{k}2^k\left(n-2\left(d-1\right)\right)+\binom{d-1}{k-1}2^{k-1}\left(n-2\left(d-1\right)\right)&=&\\
2^{k-1}\left[\binom{d-1}{k-1}+2\binom{d-1}{k}\right]n\ -\ 2^k\left(\left(d-1\right)\left[\binom{d-1}{k-1}+2\binom{d-1}{k}\right]-2\binom{d-1}{k+1}\right) &=&\\
2^{k-1}\left[\binom{d-1}{k}+\binom{d}{k}\right]n\ -\ 2^{k}\left(\left(d+1\right)\left[\binom{d-1}{k}+\binom{d}{k}\right]-2\binom{d+1}{k+1}\right).
\end{eqnarray*}
We now verify the recurrence for $a_{k,d}$.
\begin{eqnarray*}
\frac{1}{k+1}\left[2\left(2d-1\right)a_{k-1,d-1}-b_{k-1,d-1}\right] &=&\\
\frac{1}{k+1}\left[2\left(2d-1\right)2^{k-2}\left[\binom{d-2}{k-1}+\binom{d-1}{k-1}\right]- 2^{k-1}\left(d\left[\binom{d-2}{k-1}+\binom{d-1}{k-1}\right]-2\binom{d}{k}\right)\right]&=&\\
\frac{2^{k-1}}{k+1}\left[\left(d-1\right)\left[\binom{d-2}{k-1}+\binom{d-1}{k-1}\right]+2\binom{d}{k}\right]&=&\\
\frac{2^{k-1}}{k+1}\left[k\binom{d-1}{k}+k\binom{d}{k}-\binom{d-1}{k-1} +2\binom{d}{k}\right]&=&\\
\frac{2^{k-1}}{k+1}\left[\left(k+1\right)\binom{d-1}{k}+\left(k+1\right)\binom{d}{k}-\binom{d-1}{k} -\binom{d-1}{k-1}- \binom{d}{k} +2\binom{d}{k}\right]&=&\\
2^{k-1}\left[\binom{d-1}{k}+\binom{d}{k}\right]&=&a_{k,d}.
\end{eqnarray*}
%}
We now verify the recurrence for $b_{k,d}$.
\begin{eqnarray*}
\frac{1}{k+1}\left[4\left(d+1\right)\left(d-1\right)a_{k-1,d-1}\right]&=&\\
\frac{1}{k+1}\left[4\left(d+1\right)\left(d-1\right)2^{k-2}\left[\binom{d-2}{k-1}+\binom{d-1}{k-1}\right]\right]&=&\\
\frac{2^k\left(d+1\right)}{k+1}\left[k\binom{d-1}{k}+k\binom{d}{k}-\binom{d-1}{k-1}\right]&=&\\
2^k\left(d+1\right)\left[\binom{d-1}{k}+\binom{d}{k}\right] + \frac{2^k\left(d+1\right)}{k+1}\left[-\binom{d-1}{k}-  \binom{d}{k}-\binom{d-1}{k-1}\right]&=&\\
2^{k}\left(\left(d+1\right)\left[\binom{d-1}{k}+\binom{d}{k}\right]- \frac{2\left(d+1\right)}{k+1}\binom{d}{k}\right)&=& b_{k,d}.
\end{eqnarray*}
This completes the proof.
\end{proof}

\section{Around $\g^{(d,i)}$}\label{sec:hierarchy}
\begin{proof}[Proof of Proposition \ref{prop:g-hierarchy}]
We need to show that the polynomial $P_{d,i}(t)$ is a nonnegative integer linear combination of the elements of the basis $B_{d,i+1}$, for all integers $0<i$ and $0\leq d$.

For a nonnegative integer $m$ let $S_m$ denote the $\mathbb{Z}$-module of symmetric polynomials over $\mathbb{Z}$ of degree at most $m$ with axis of symmetry at $\frac{m}{2}$.
Let $0<b\leq a$ be integers. Note that
\begin{equation}\label{eq:poly}
(1+t+...+t^b)(1+t+...+t^a)=(1+t+...+t^{b-1})(1+t+...+t^{a+1}) + t^b(1+t+...+t^{a-b}),
\end{equation}
expressing one polynomial in $S_{a+b}$ as a nonnegative integer linear combination of two other polynomials in $S_{a+b}$.

In the product $P_{d,i}(t)=(1+t+...+t^i)^q(1+t+...+t^r)$ apply equation (\ref{eq:poly}) to a pair of terms with degrees $a=i$ and $b=r$, thus expressing $P_{d,i}$ as a nonnegative integer linear combination of polynomials in $S_{d+1}$ where in each summand there is at most one term of the form $1+t+...+t^{k}$ with $0<k<i$.
Inductively, repeat this process with each summand, pairing two terms with degrees $a,b$ where $0<b\leq a=i$ as long as possible. The end result is that $P_{d,i}(t)$ is expressed as a nonnegative integer linear combination of polynomials of the form $t^c(1+t+...+t^{i+1})^{q'}(1+t+...+t^{r'})$ where $d+1=2c+q'(i+1)+r'$ and $0\leq r'\leq i$. In particular, all of these polynomials are in $B_{d,i+1}$.
\end{proof}
For two simplicial complexes, the missing faces of their join are the disjoint union of the missing faces of each of them. Thus, if $\Delta \in \mathcal{HS}(i,d)$ and $\Delta' \in \mathcal{HS}(i,d')$ then $\Delta*\Delta' \in \mathcal{HS}(i,d+d'+1)$. The following proposition shows that if Conjecture \ref{conj:missing-g-conj} holds for $\Delta$ and $\Delta'$ then it holds for their join too. For the family of all homology spheres, i.e. the case $i\geq d+d'+2$, this is clear, as well as for the flag case ($i=1$) which was verified by Gal \cite{Gal}.
\begin{proposition}\label{prop:g-join}
Let $h(t)\in S_{m+1}$ and $h'(t)\in S_{m'+1}$. If $\g^{(m,i)}(h(t)))$ and $\g^{(m',i)}(h'(t)))$ are nonnegative then $\g^{(m+m'+1,i)}(h(t)h'(t))$ is nonnegative.
\end{proposition}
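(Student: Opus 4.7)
The plan is to reduce Proposition~\ref{prop:g-join} to a single product identity for the $P$-polynomials: for all integers $d, d' \geq -1$ and $i > 0$, the product $P_{d,i}(t) \cdot P_{d',i}(t)$ is a nonnegative integer combination of the elements of the basis $B_{d+d'+1,i}$. Granting this, the proposition follows by bilinearity: writing $h(t) = \sum_a g_a\, t^a P_{m-2a,i}(t)$ and $h'(t) = \sum_b g'_b\, t^b P_{m'-2b,i}(t)$ with $g_a = \g^{(m,i)}_a(h) \geq 0$ and $g'_b = \g^{(m',i)}_b(h') \geq 0$, one gets
$$h(t)h'(t) = \sum_{a,b} g_a g'_b \cdot t^{a+b}\bigl(P_{m-2a,i}(t) \cdot P_{m'-2b,i}(t)\bigr),$$
which, by the product identity applied to each factor $P_{m-2a,i}\cdot P_{m'-2b,i}$, exhibits $h(t)h'(t)$ as a nonnegative integer combination of elements of $B_{m+m'+1,i}$.

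To establish the product identity I would write $d+1 = q_1 i + r_1$ and $d'+1 = q_2 i + r_2$, with $1 \leq r_1 \leq r_2 \leq i$ (without loss of generality), and abbreviate $\sigma_k := 1 + t + \cdots + t^k$, so that $P_{d,i}(t) P_{d',i}(t) = \sigma_i^{q_1+q_2} \cdot \sigma_{r_1}\sigma_{r_2}$. The key is to rewrite the factor $\sigma_{r_1}\sigma_{r_2}$ by iterating the identity (\ref{eq:poly}), $\sigma_b \sigma_a = \sigma_{b-1}\sigma_{a+1} + t^b \sigma_{a-b}$ (valid for $0 < b \leq a$): each application decreases the smaller subscript by one, increases the larger by one, and spawns a new term of the form $t^{(\cdot)}\sigma_{(\cdot)}$. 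I would iterate until either
\begin{itemize}
\item[(i)] the smaller subscript reaches $0$, which is possible precisely when $r_1+r_2 \leq i$ and yields
$$\sigma_{r_1}\sigma_{r_2} = \sigma_{r_1+r_2} + \sum_{j=0}^{r_1-1} t^{\,r_1-j}\sigma_{r_2-r_1+2j};$$
\item[(ii)] the larger subscript reaches $i$, which is forced when $r_1+r_2 > i$ and yields
$$\sigma_{r_1}\sigma_{r_2} = \sigma_{s}\sigma_i + \sum_{j=0}^{i-r_2-1} t^{\,r_1-j}\sigma_{r_2-r_1+2j}, \qquad s := r_1+r_2-i \geq 1.$$
\end{itemize}
Multiplying back by $\sigma_i^{q_1+q_2}$, every resulting summand takes the form $t^e \sigma_i^{q''}\sigma_{r''}$ with $0 \leq r'' \leq i$, which—using the rewriting $\sigma_i^{q''}\sigma_0 = \sigma_i^{q''-1}\sigma_i = P_{q''i-1,i}$ whenever $r''=0$ and $q''\geq 1$—is a basis element $t^e P_{d+d'+1-2e,i}(t)$ of $B_{d+d'+1,i}$, appearing with coefficient $1$.

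The only genuine task is bookkeeping: one must check that the values of $e$ produced in each case are pairwise distinct (so each basis element appears with coefficient $0$ or $1$) and that each subscript $r''$ stays in $[0,i]$. Both are immediate from the formulas: in Case (i) one gets $e \in \{0,1,\ldots,r_1\}$, each once; in Case (ii) one gets $e \in \{0\} \cup \{s+1,\ldots,r_1\}$, each once. The method is essentially the inductive use of (\ref{eq:poly}) already employed in the proof of Proposition~\ref{prop:g-hierarchy}, now applied to the product $\sigma_{r_1}\sigma_{r_2}$ rather than to a single $\sigma_r$, so no new obstacle is anticipated beyond verifying these index ranges.
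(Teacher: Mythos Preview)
Your proposal is correct and follows essentially the same route as the paper: reduce to showing that each product $P_{d,i}(t)\,P_{d',i}(t)$ lies in the nonnegative integer span of $B_{d+d'+1,i}$, and then iterate the identity~(\ref{eq:poly}) on the two ``short'' factors $\sigma_{r_1}\sigma_{r_2}$ until at most one such factor of degree $<i$ remains. The paper states this iteration tersely (``repeat this process \ldots\ as long as such a pair exists''), while you spell out the bilinearity reduction and give closed-form expressions for the two terminating cases; the underlying argument is the same.
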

\begin{proof}
We need to show that the polynomial $P_{d,i}(t)P_{d',i}(t)$ is a nonnegative integer linear combination of the elements of the basis $B_{d+d'+1,i}$, for all integers $0<i$ and $0\leq d,d'$.
Let $d+1=qi+r$ and $d'+1=q'i+r'$ where $q,q',r,r'$ are the unique nonnegative integers such that $1\leq r,r'\leq i$. W.l.o.g. assume $r'\leq r$. If $r=i$ we are done as $P_{d,i}(t)P_{d',i}(t)\in B_{d+d'+1,i}$, so assume $r<i$.
Apply (\ref{eq:poly}) to the pair of terms of degrees $a=r$ and $b=r'$ in $P_{d,i}(t)P_{d',i}(t)$. Repeat this process in each summand w.r.t. the pair of terms of degrees $a$ and $b$ such that $0<b\leq a\leq i-1$, as long as such a pair exists (indeed, there is at most one such pair in each summand). The end result
is that $P_{d,i}(t)P_{d',i}(t)$ is expressed as a nonnegative integer linear combination of polynomials in $B_{d+d'+1,i}$.
\end{proof}

Note that for $\mathcal{HS}(i,d)$ with $d\leq 4$ and $(i,d)\neq (2,4)$, Conjecture \ref{conj:missing-g-conj} holds. As mentioned before, for the case $i=1$ see Gal \cite{Gal}, and for the other cases use Theorem \ref{thm:stacked,Kalai} (LBT).
The new inequality for $\Delta\in \mathcal{HS}(2,4)$ reads as $\f_1\geq 6\f_0 - 21$. Note that Theorem \ref{thm:stacked,Kalai} gives $\f_1\geq 5\f_0 - 15$. For flag $4$-spheres $\f_1\geq 7\f_0 - 30$ holds.
\begin{lemma}
If $\Delta\in \mathcal{HS}(2,4)$ then $\f_1(\Delta)\geq 5\frac{6}{91}\f_0(\Delta) - 15$.
\end{lemma}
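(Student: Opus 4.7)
The plan is to sharpen the MPW double-counting bound by pinning down the structure of every vertex link in a $\Delta \in \mathcal{HS}(2,4)$. Specifically, I will show that each $\lk(v,\Delta)$ is a prime, non-simplicial 3-homology sphere, so that Kalai's equality case of the LBT lifts the generic bound $g_2 \geq 0$ on the link to $g_2 \geq 1$; a double count then closes the gap.

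I would first establish two structural facts. (i) For each vertex $v$, $\lk(v,\Delta)$ has no missing 3-face: a putative missing 3-face $T$ (with $|T|=4$, $\partial T \subseteq \lk(v)$, and $T\cup\{v\}\notin\Delta$) places all four triangles of $T$, joined to $v$, into $\Delta$, so $\partial T\subseteq\Delta$, and the absence of missing 3-faces in $\Delta$ forces $T\in\Delta$; then all five 4-subsets of $T\cup\{v\}$ lie in $\Delta$, and the absence of missing 4-faces in $\Delta$ forces $T\cup\{v\}\in\Delta$, a contradiction. (ii) No vertex satisfies $\lk(v)=\partial\sigma^4$ (the unique 3-homology sphere on five vertices), equivalently $\deg(v)\geq 6$. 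Indeed, if $\deg(v)=5$, an analogous boundary-completion shows that every proper subset of $S = \{v\}\cup\lk(v)$ lies in $\Delta$ (using the absence of missing 4-faces to place the 5-vertex set $\lk(v)$ itself into $\Delta$ once all its 4-subsets are present), while $S\notin\Delta$ since $\dim\Delta=4$; so $S$ would be a missing 5-face, violating $\mathcal{HS}(2,4)$. Combining (i) and (ii), $\lk(v,\Delta)$ is a prime non-simplicial 3-homology sphere for every $v$. Kalai's equality case of the LBT for 3-manifolds~\cite{Kalai-LBT} states that $g_2 = 0$ iff the sphere is stacked; since a non-simplicial stacked 3-sphere has missing facets, we conclude $g_2(\lk(v))\geq 1$, i.e.\ $\f_1(\lk(v))\geq 4\deg(v)-9$.

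Finally, summing over vertices via $3\f_2(\Delta) = \sum_v \f_1(\lk(v,\Delta))$ and substituting the Dehn--Sommerville identity $\f_2 = 4\f_1 - 10\f_0 + 20$ for 4-spheres yields
\begin{equation*}
12\f_1 - 30\f_0 + 60 \;\geq\; 8\f_1 - 9\f_0, \quad\text{hence}\quad \f_1 \;\geq\; \tfrac{21}{4}\f_0 - 15.
\end{equation*}
Since $\tfrac{21}{4} > \tfrac{461}{91}$, this implies the lemma, and in fact gives the stronger constant $\tfrac{21}{4}$, which is tight at $S(2,4,8)$. The hard part is step (ii): one must carefully trace which subsets of the hypothetical missing 6-set are forced into $\Delta$ by the combined no-missing-3-face and no-missing-4-face constraints. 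The external ingredient that improves on the plain LBT bound $\f_1 \geq 5\f_0 - 15$ is the equality case of Kalai's LBT for 3-spheres, which under primality upgrades $g_2(\lk(v))\geq 0$ to $g_2(\lk(v))\geq 1$.
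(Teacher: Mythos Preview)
Your argument is correct and in fact proves a \emph{stronger} inequality than the lemma, namely $\f_1(\Delta)\geq \tfrac{21}{4}\f_0(\Delta)-15$. Steps (i) and (ii) are fine (step (i) is Lemma~\ref{lem:missing_faces(lk)} specialized to this case); the appeal to the equality case of Kalai's LBT is legitimate since a stacked $3$-sphere other than $\partial\sigma^4$ necessarily has a missing facet; and the MPW/Dehn--Sommerville computation is correct.

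Your route differs from the paper's. Both hinge on the same structural fact---every vertex link is a non-stacked $3$-homology sphere, hence has $g_2\geq 1$---but they exploit it differently. The paper recasts $g_2(\lk v)\geq 1$ as the existence of a generic $4$-stress on the link, lifts it to a $5$-stress on $\cst(v)$, and then argues that stresses coming from an \emph{independent} set $Y$ of low-degree vertices are linearly independent, yielding $g_2(\Delta)\geq |Y|$; a pigeonhole on vertex degrees (under the hypothesis $\f_1<6\f_0-21$) gives $|Y|\geq \tfrac{6}{91}\f_0$, whence the constant $5\tfrac{6}{91}$. You instead feed $\f_1(\lk v)\geq 4\deg(v)-9$ directly into the double count $3\f_2=\sum_v \f_1(\lk v)$ and close with the Dehn--Sommerville identity $\f_2=4\f_1-10\f_0+20$. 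Your argument is more elementary (no rigidity theory, no independent-set extraction), strictly sharper ($\tfrac{21}{4}>\tfrac{461}{91}$), and tight at $S(2,4)$, where every vertex link has $g_2=1$.
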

\begin{proof}[Sketch of proof]
By Theorem \ref{thm:missingLBT}(b) $\f_0(\Delta)\geq 8$, hence $6\f_0(\Delta) - 21 > 5\frac{6}{91}\f_0(\Delta) - 15$.
Assume that $\f_1(\Delta)< 6\f_0(\Delta) - 21$. Look on the $1$-skeleton of $\Delta$. In this graph the average vertex degree is $<12$, hence at least $\frac{6}{7}$ of the vertices have degree $<13$. Denote by $X$ the subset of vertices of degree $<13$. Then $X$ contains an independent set $Y$ of size $\geq \frac{6}{91}\f_0(\Delta)$.
We now use generic rigidity arguments for graphs.
The link of any vertex is in $\mathcal{HS}(2,3)$, hence is not stack, hence its graph has a generic $4$-stress. Thus the $1$-skeleton of the closed star of any vertex $v$ has a generic $5$-stress containing an edge with $v$ in its support.
Picking such stress for each $v\in Y$ yield an independent set of stresses, thus $\g_2(\Delta)\geq |Y|$, hence $\f_1(\Delta)\geq 5\frac{6}{91}\f_0(\Delta) - 15$.
\end{proof}
To end, the `first' new inequality to consider is the following.
\begin{problem}
Show that if $\Delta$ is the boundary of a simplicial $5$-polytope with no missing faces of dimension $>2$ then $\f_1(\Delta)\geq 6\f_0(\Delta) - 21$.
\end{problem}
\begin{acknowledgement}
I would like to thank Lou Billera, Gil Kalai, Etienne Rassart and Ed Swartz for helpful discussions. I would like to thank Christos Athanasiadis for pointing out a mistake in the calculations in Proposition \ref{prop:MPWclique}. Further thanks go to the referees, whose comments helped to improve the presentation.
\end{acknowledgement}
\bibliographystyle{amsplain}
\bibliography{biblio}

\providecommand{\bysame}{\leavevmode\hbox to3em{\hrulefill}\thinspace}
\providecommand{\MR}{\relax\ifhmode\unskip\space\fi MR }
% \MRhref is called by the amsart/book/proc definition of \MR.
\providecommand{\MRhref}[2]{%
  \href{http://www.ams.org/mathscinet-getitem?mr=#1}{#2}
}
\providecommand{\href}[2]{#2}
\begin{thebibliography}{10}

\bibitem{Athanasiadis}
Christos~A. Athanasiadis, \emph{Some combinatorial properties of flag
  simplicial pseudomanifolds and spheres}, preprint, arXiv:0807.4369 (2008).

\bibitem{BarnetteManifolds}
David Barnette, \emph{Graph theorems for manifolds}, Israel J. Math.
  \textbf{16} (1973), 62--72. \MR{MR0360364 (50 \#12814)}

\bibitem{Barnette-LBT}
\bysame, \emph{A proof of the lower bound conjecture for convex polytopes},
  Pacific J. Math. \textbf{46} (1973), 349--354. \MR{MR0328773 (48 \#7115)}

\bibitem{Billera-Lee}
Louis~J. Billera and Carl~W. Lee, \emph{A proof of the sufficiency of
  {M}c{M}ullen's conditions for {$f$}-vectors of simplicial convex polytopes},
  J. Combin. Theory Ser. A \textbf{31} (1981), no.~3, 237--255. \MR{MR635368
  (82m:52006)}

\bibitem{Bjorner-NoteAlexanderDuality}
Anders Bj{\"o}rner, Lynne~M. Butler, and Andrey~O. Matveev, \emph{Note on a
  combinatorial application of {A}lexander duality}, J. Combin. Theory Ser. A
  \textbf{80} (1997), no.~1, 163--165. \MR{MR1472111 (98h:05181)}

\bibitem{Charney-Davis}
Ruth Charney and Michael Davis, \emph{The {E}uler characteristic of a
  nonpositively curved, piecewise {E}uclidean manifold}, Pacific J. Math.
  \textbf{171} (1995), no.~1, 117--137. \MR{MR1362980 (96k:53066)}

\bibitem{Davis-Okun}
Michael~W. Davis and Boris Okun, \emph{Vanishing theorems and conjectures for
  the {$\ell\sp 2$}-homology of right-angled {C}oxeter groups}, Geom. Topol.
  \textbf{5} (2001), 7--74 (electronic). \MR{MR1812434 (2002e:58039)}

\bibitem{Gal}
{\'S}wiatos{\l}aw~R. Gal, \emph{Real root conjecture fails for five- and
  higher-dimensional spheres}, Discrete Comput. Geom. \textbf{34} (2005),
  no.~2, 269--284. \MR{MR2155722 (2006c:52019)}

\bibitem{Gromov}
Mikhael Gromov, \emph{Partial differential relations}, Ergebnisse der
  Mathematik und ihrer Grenzgebiete (3) [Results in Mathematics and Related
  Areas (3)], vol.~9, Springer-Verlag, Berlin, 1986. \MR{MR864505 (90a:58201)}

\bibitem{Kalai-LBT}
Gil Kalai, \emph{Rigidity and the lower bound theorem. {I}}, Invent. Math.
  \textbf{88} (1987), no.~1, 125--151. \MR{MR877009 (88b:52014)}

\bibitem{Kalai-manyspheres}
\bysame, \emph{Many triangulated spheres}, Discrete Comput. Geom. \textbf{3}
  (1988), no.~1, 1--14. \MR{MR918176 (89b:52025)}

\bibitem{McMullen-g-conj}
P.~McMullen, \emph{The numbers of faces of simplicial polytopes}, Israel J.
  Math. \textbf{9} (1971), 559--570. \MR{MR0278183 (43 \#3914)}

\bibitem{Meshulam-DomAndHom}
Roy Meshulam, \emph{Domination numbers and homology}, J. Combin. Theory Ser. A
  \textbf{102} (2003), no.~2, 321--330. \MR{MR1979537 (2004c:05144)}

\bibitem{Nagel}
Uwe Nagel, \emph{Empty simplices of polytopes and graded {B}etti numbers},
  Discrete Comput. Geom. \textbf{39} (2008), no.~1-3, 389--410. \MR{MR2383766}

\bibitem{Nevo-Novinsky}
Eran Nevo and Eyal Novinsky, \emph{A characterization of simplicial polytopes
  with $g_2=1$}, preprint, arXiv:0804.1813 (2008).

\bibitem{Stanley-gThm}
Richard~P. Stanley, \emph{The number of faces of a simplicial convex polytope},
  Adv. in Math. \textbf{35} (1980), no.~3, 236--238. \MR{MR563925 (81f:52014)}

\bibitem{StanleyGreenBook}
\bysame, \emph{Combinatorics and commutative algebra}, second ed., Progress in
  Mathematics, vol.~41, Birkh\"auser Boston Inc., Boston, MA, 1996.
  \MR{MR1453579 (98h:05001)}

\end{thebibliography}
\end{document}